\DeclareMathOperator*{\argmin}{argmin}
\DeclareMathOperator*{\argmax}{argmax}
\DeclarePairedDelimiter\ceil{\lceil}{\rceil}
\newcommand{\bq}{\begin{equation}}
\newcommand{\eq}{\end{equation}}
\newcommand{\R}{\mathbb{R}}
\newcommand{\Z}{\mathbb{Z}}
\newcommand{\abs}[1]{\left\vert#1\right\vert}
\newcommand{\G}{\mathcal{G}}
\newcommand{\bO}{\mathcal{O}}
\newcommand{\dist}{\text{dist}}
\newcommand{\Dt}{\mathcal{D}}
\newcommand{\Af}{\mathcal{A}}
\newcommand{\Sf}{\mathcal{S}}
\newcommand{\Mf}{\mathcal{M}}
\newcommand{\MA}{Monge-Amp\`ere\xspace}
\algnewcommand{\LineComment}[1]{\State \(\triangleright\) #1}
\newtheorem{theorem}{Theorem}
\theoremstyle{lemma}
\newtheorem{lemma}[theorem]{Lemma}
\newtheorem{corollary}[theorem]{Corollary}
\newtheorem{definition}[theorem]{Definition}
\newtheorem{remark}[theorem]{Remark}
\newtheorem{hypothesis}[theorem]{Hypothesis}
\theoremstyle{remark}
\newcommand\appendix@section[1]{%
\refstepcounter{section}%
\orig@section*{Appendix \@Alph\c@section: #1}%
}
\let\orig@section\section
\g@addto@macro\appendix{\let\section\appendix@section}
\begin{document}

\title[Second boundary value problem for Monge-Amp\`ere equation]{Convergence framework for the second boundary value problem for the Monge-Amp\`ere equation}

\author{Brittany Froese Hamfeldt}
\address{Department of Mathematical Sciences, New Jersey Institute of Technology, University Heights, Newark, NJ 07102}
\email{bdfroese@njit.edu}
\thanks{This work was partially supported by NSF DMS-1619807 and NSF DMS-1751996.}

\begin{abstract}
  It is well known that the quadratic-cost optimal transportation problem is formally equivalent to the second boundary value problem for the Monge-Amp\`ere equation.  Viscosity solutions are a powerful tool for analysing and approximating fully nonlinear elliptic equations.  However, we demonstrate that this nonlinear elliptic equation does not satisfy a comparison principle and thus existing convergence frameworks for viscosity solutions are not valid.  We introduce an alternative PDE that couples the usual Monge-Amp\`ere equation to a Hamilton-Jacobi equation that restricts the transportation of mass.  We propose a new interpretation of the optimal transport problem in terms of viscosity subsolutions of this PDE.  Using this reformulation, we develop  a framework for proving convergence of a large class of approximation schemes for the optimal transport problem.  Examples of existing schemes that fit within this framework are discussed.
\end{abstract}

\date{\today}    
\maketitle

The goal of optimal transportation is to find a transport plan $T(x)$ that rearranges a distribution $f$ into a second distribution $g$, while minimising some transport cost.  In the most commonly studied case, the cost is quadratic in the displacement and the goal is to compute the optimal mapping
\bq\label{eq:OT}
T(x) = \argmin\limits_{T \in \Mf}\int_X \abs{x-T(x)}^2 f(x)\,dx 
\eq
where
\[ \Mf = \left\{T:X\to Y \mid \int_E f(x)\,dx = \int_{T(E)} g(y)\,dy \text{ for all measurable } E \subset X \right\}. \]

Formally, the optimal mapping can be characterized as the gradient of a convex function $u$, which is given as the solution of a \MA equation
\[ g(\nabla u(x))\det(D^2u(x)) = f(x) \]
equipped with the constraint
\[ \nabla u(X) \subset \bar{Y}. \]
This is known as the \emph{second boundary value problem for the \MA equation}.
This PDE has been studied under strong hypotheses that yield classical solutions~\cite{Caf_BVP2, Delanoe, Urbas_BVP2} and much weaker hypotheses that yield generalised solutions~\cite{Villani}.  However, existing solution notions do not lend themselves naturally to numerical approximation.

In the case of semi-discrete optimal transportation (in which the density $f$ is replaced with a collection of Dirac masses), a generalised interpretation of the \MA equation led to the earliest numerical methods for optimal transport~\cite{Aurenhammer_minkowski,olikerprussner88}, and has more recently been developed into a very efficient method~\cite{Levy_OT}. This approach can  be applied to the continuous problem through quantisation of the continuous density $f$.  A proof of convergence for the semi-discrete problem, including convergence rates, is also available~\cite{Berman_convergence}.  

In this work, we are concerned with optimal transport involving continuous measures (densities).  A powerful tool for studying fully nonlinear second order elliptic equations is the viscosity solution~\cite{CIL}, which uses a maximum principle argument to transfer derivatives onto smooth test functions.  This concept of weak solution is particularly useful for suggesting appropriate numerical methods~\cite{BSnum,FroeseMeshfreeEigs,FOFiltered,ObermanEigenvalues}.  Several numerical methods have been proposed for the second boundary value problem for the \MA equation~\cite{BFO_OTNum,FroeseTransport,Prins_BVP2}.  However, these methods lack any proof of convergence.

Recently, two new results have appeared related to the convergence of numerical methods for the second boundary value problem for the Monge-Amp\`ere equation~\cite{BenamouDuval_MABVP2,LindseyRubinstein}.  Both convergence results hinge on two important observations: 1) the \MA equation enforces volume conservation, and it is sufficient for the discrete version of this operator to overestimate the true value (interpreted in an appropriate weak sense) and 2) the stability of optimal transportation ensures that the solutions to a sequence of ``nearby'' optimal transport problems converge to the desired solution.  

The first of these methods describes an optimisation problem that enforces over-estimation of the \MA operator via the objective function, with convexity of the solution and the second boundary condition included as constraints~\cite{LindseyRubinstein}.  The optimisation framework leads to a very robust approach that allows for proof that the discrete problem is well-posed and that solutions converge to the true weak solution of the optimal transport problem.  A downside to this framework is that the convexity of the solution is enforced as a global constraint, which is expensive to implement numerically. 

The second of these methods relies on a measure theory interpretation of the \MA equation to produce a discretisation of the \MA equation and second boundary condition~\cite{BenamouDuval_MABVP2}.  This leads to a fully local, computationally efficient numerical method.  Because of the natural interpretation of this approximation in terms of measures, the authors successfully prove that solutions of the discrete problem will converge to the solutions to the true problem.  However, existence of solutions to the discrete problem is left as an open question.

The goal of the present article is to develop a general framework for proving the convergence of approximation schemes for the quadratic-cost optimal transport problem via the second boundary value problem for the Monge-Amp\`ere equation.  The usual general techniques for proving uniqueness and convergence rely on a comparison principle that is demonstrably false for this equation.  We introduce an alternative form of the PDE that, while it does not satisfy a comparison principle, does place strong constraints on subsolutions.  As in~\cite{BenamouDuval_MABVP2,LindseyRubinstein}, we make the key observations that the \MA operator need only be enforced as a constraint and that optimal transportation is stable.  Using these properties and the more general theory of viscosity solutions, we show that subsolutions are equivalent to generalised solutions of the optimal transportation problem.  We then describe a general framework for analysing the convergence of approximation schemes.  In particular, schemes that are consistent, monotone, and under-estimating are guaranteed to be well-posed and to converge.  Several existing schemes for the \MA equation such as~\cite{benamou2014monotone,BenamouDuval_MABVP2,BFO_OTNum,FinlayOberman,FroeseMeshfreeEigs,FO_MATheory,HS_Quadtree,Nochetto_MAConverge}, after slight modification, fit within this framework and are thus guaranteed to compute the weak solution of the original optimal transportation problem.

\section{Background}\label{sec:background}

\subsection{Second boundary value problem}\label{sec:bvp2}
Due to key results by Brenier~\cite{Brenier_polar} and Rockafellar~\cite{Rockafellar_convex}, the optimal transport plan can be characterised as the (sub)gradient of a convex function, $T(x) = \nabla u(x)$.  Combining this with conservation of mass leads to an elliptic \MA equation
\bq\label{eq:MA}
g(\nabla u(x)) \det(D^2u(x)) = f(x), \quad u \text{ is convex.} 
\eq
Instead of being coupled to a traditional boundary condition, this equation is augmented with a global constraint on the solution (sub)gradient,
\bq\label{eq:bvp2}
\partial u(X) \subset \bar{Y},
\eq
which leads to the so-called \emph{second boundary value problem} for the \MA equation.

By introducing a \emph{defining function} for the target set $Y$, it is possible to re-express~\eqref{eq:bvp2} as a formally equivalent nonlinear Neumann boundary condition~\cite{Delanoe}.
\begin{definition}[Defining function]\label{def:defining}
A \emph{defining function} for the set $Y \subset \R^n$ is a continuous function $H(y)$ satisfying
\[
H(y) \begin{cases}
<0 & y \in Y \\
=0 & y \in \partial Y\\
>0 & y \notin \bar{Y}.
\end{cases}
\]
\end{definition}
A natural choice of defining function is the signed distance function.

This is used to rewrite the global constraint~\eqref{eq:bvp2} as
\bq\label{eq:HJBC}
H(\nabla u(x)) = 0, \quad x \in \partial X,
\eq
which requires that boundary points be mapped to boundary points.  When the data is sufficiently smooth, with density functions supported in uniformly convex sets $X, Y$ and bounded away from 0 and $\infty$, the second boundary value problem~\eqref{eq:MA}, \eqref{eq:HJBC} admits a smooth classical solution~\cite{Caf_BVP2, Urbas_BVP2}.  If the equation is augmented by an additional condition such as a mean-zero condition $\langle u \rangle = 0$, the solution is unique.

In general, smooth solutions do not exist and some notion of weak solution is needed in order to properly interpret solutions of the optimal transport problem using the \MA equation.  A powerful option is the Aleksandrov solution, which relies on the fact that the subgradient of the convex potential $u$ can be used to define a measure.

\begin{definition}[Aleksandrov solution]\label{def:aleks}
We say that a convex function $u$ is an \emph{Aleksandrov solution} of the \MA equation~\eqref{eq:MA} if
\bq\label{eq:aleks}
\int_E f(x)\,dx = \int_{\partial u(E)} g(y)\,dy
\eq
for every measurable set $E \subset X$.
\end{definition}

We will consider the problem under the following hypotheses on the data.

\begin{hypothesis}[Conditions on data]\label{hyp}
\end{hypothesis}

\begin{enumerate}
\item[(H1)] $X, Y$ are convex, bounded, open domains.
\item[(H2)] The source density $f\in L^1(X)$ is non-negative and lower semicontinuous.
\item[(H3)] The target density $g\in L^1(\R^n)$ is positive on $Y$, vanishes on $Y^c$, and upper semicontinuous.
\item[(H4)] The data satisfies the mass balance condition
\bq\label{eq:massbalance}  \int_X f(x)\,dx = \int_Y g(y)\,dy.\eq
\end{enumerate}

We remark that while the density function $g$ must be positive, it does not need to be bounded away from zero.  The source density $f$, on the other hand, is allowed to vanish, and it can be supported on a non-convex set.

Under these conditions, the second boundary value problem for the \MA equation, interpreted in the Aleksandrov sense, is equivalent to the solution of the optimal transportation problem.


\begin{theorem}[Existence of Aleksandrov solution~{\cite[Theorems 2.12 and 4.10]{Villani}}]\label{thm:aleksExist}
Under Hypothesis~\ref{hyp}, there exists an Aleksandrov solution $u$ of the \MA equation~\eqref{eq:MA} that satisfies the second boundary constraint~\eqref{eq:bvp2}.  Moreover, this solution is uniquely defined on $\text{supp}(f)$ up to additive constants and the subgradient map $\partial u$ solves the optimal transport problem~\eqref{eq:OT}.
\end{theorem}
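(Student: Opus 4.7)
The plan is to derive the theorem from the classical theory of optimal transportation (following the route developed by Brenier, McCann, and Villani) rather than to attack the PDE directly. The starting point is to work with the Kantorovich relaxation of \eqref{eq:OT}: minimise $\int_{X\times Y}|x-y|^2\,d\pi(x,y)$ over all probability measures $\pi$ on $X\times Y$ with marginals $f\,dx$ and $g\,dy$. Under hypothesis (H4) the admissible set is nonempty (the product measure belongs to it), and under (H1)--(H3) it is tight and weak-$*$ closed, so by Prokhorov's theorem and lower semicontinuity of the cost a minimiser $\pi^*$ exists.

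Next I would invoke Brenier's theorem: the quadratic-cost optimal plan $\pi^*$ is supported on a cyclically monotone set, and by Rockafellar's theorem every cyclically monotone set in $\R^n\times\R^n$ is contained in the graph of the subdifferential of some lower semicontinuous convex function $u:\R^n\to\R\cup\{+\infty\}$. Since $f$ is absolutely continuous, $u$ is differentiable $f$-a.e.\ and $\pi^*=(\mathrm{id}\times\nabla u)_\#(f\,dx)$; in particular $\nabla u$ pushes $f\,dx$ forward to $g\,dy$. From this push-forward property I would verify the Aleksandrov identity \eqref{eq:aleks}: for any Borel $E\subset X$, $\partial u(E)$ is Borel (up to a set of measure zero, by Aleksandrov's theorem on the regularity of convex functions), and
\[
\int_E f(x)\,dx = \pi^*(E\times Y) = \pi^*(E\times \partial u(E)) = \int_{\partial u(E)}g(y)\,dy.
\]

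For the second boundary condition $\partial u(X)\subset\bar Y$, the natural approach is to replace $u$ if necessary by its convex envelope restricted to $\bar X$ and to use that $\partial u(\mathrm{supp}(f))\subset \bar Y$ (because $g$ vanishes off $\bar Y$) together with convexity of $Y$ to conclude that every subgradient at an arbitrary $x\in X$ lies in the convex hull of nearby subgradients, hence in $\bar Y$. Uniqueness on $\mathrm{supp}(f)$ up to additive constants is a consequence of Brenier's uniqueness statement: any two convex potentials whose gradients push $f$ to $g$ agree $f$-a.e.\ up to a constant, so they coincide on the interior of $\mathrm{supp}(f)$ and, by lower semicontinuity and convexity, on all of $\mathrm{supp}(f)$. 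Finally, because $\pi^*$ is concentrated on the graph of $\nabla u$, the map $T=\nabla u$ realises the Monge infimum in \eqref{eq:OT}.

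The main obstacle, and the place where the weak hypotheses (H2)--(H3) matter, is the passage from ``$\nabla u$ is defined $f$-a.e.\ and pushes $f$ to $g$'' to the global statement $\partial u(X)\subset \bar Y$ together with the Aleksandrov identity on \emph{every} Borel subset of $X$. When $f$ vanishes on parts of $X$ or is supported on a non-convex set, the potential $u$ is only pinned down on $\mathrm{supp}(f)$ and one must make a careful convex extension to $X$ that preserves both properties; this is exactly the content of the two Villani theorems cited, and it is the step I would lean on their argument for rather than reprove from scratch.
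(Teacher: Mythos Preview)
The paper does not supply its own proof of this theorem; it is stated as a background result imported directly from Villani (Theorems~2.12 and~4.10 in the cited reference), and your sketch---Kantorovich relaxation, cyclical monotonicity, Rockafellar, Brenier's map, then the delicate convex extension to handle vanishing $f$---is precisely the route those cited theorems take. You correctly identify the one nontrivial step (extending the potential from $\mathrm{supp}(f)$ to all of $X$ while preserving $\partial u(X)\subset\bar Y$ and the Aleksandrov identity) and appropriately defer to the cited source for it, which matches the paper's treatment exactly.
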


The existence of Aleksandrov solutions to the Dirichlet problem is slightly more delicate, but will play an important role in building up a robust understanding of viscosity solutions of the second boundary value problem.  The following key result is a special case of a more general theorem due to Bakelmen.
\begin{theorem}[Aleksandrov solutions of the Dirichlet problem~{\cite[Theorem 12.1]{Bakelman_Elliptic}}]\label{thm:aleksDirichlet}
Let $X$ be a uniformly convex, bounded, domain.  Suppose that $f$ is continuous and non-negative on $\bar{X}$  with 
\[ f(x) \leq C_1\text{dist}(x,\partial X), \quad x \in X\cap U \]
where $C_1>0$ is a constant and $U$ is some neighbourhood of the boundary $\partial X$.  Suppose also that $g(y)$ is continuous and satisfies
\[ g(y) \geq C_2, \quad y \in \R^n \]
for some constant $C_2>0$.  Let $h$ be a continuous function on $\partial X$. Then there exists at least one continuous Aleksandrov solution $u(x)$ of the \MA equation~\eqref{eq:MA} such that $u(x) = h(x)$ on $\partial X$.
\end{theorem}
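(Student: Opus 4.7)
The plan is to prove existence by a vanishing-viscosity-style approximation of the data followed by a stability argument, with barriers at the boundary doing the essential work.

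First, I would regularize the data. Take smooth positive $f_\epsilon$ and $g_\epsilon$ with $f_\epsilon \to f$ uniformly on $\bar X$, $g_\epsilon \to g$ locally uniformly on $\R^n$, $f_\epsilon \geq \epsilon$, $g_\epsilon \geq C_2/2$, and smooth $h_\epsilon\to h$ uniformly on $\partial X$. On the uniformly convex domain $X$ with smooth strictly positive data, classical results of Caffarelli and Urbas provide smooth convex solutions $u_\epsilon$ of the Dirichlet problem $g_\epsilon(\nabla u_\epsilon)\det(D^2 u_\epsilon) = f_\epsilon$ with $u_\epsilon = h_\epsilon$ on $\partial X$, and these are automatically Aleksandrov solutions.

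Second, I would obtain uniform $L^\infty$ and equicontinuity estimates so that Arzelà–Ascoli produces a locally uniform limit $u$. An upper envelope built from $h_\epsilon$ and the affine supporting hyperplanes controls $u_\epsilon$ from above. For the lower bound and, crucially, for the continuous attainment of the boundary values, I would construct barriers at each $x_0 \in \partial X$. Uniform convexity provides an interior tangent ball and a smooth strictly convex function $\rho$ with $\rho(x_0)=0$, $\rho<0$ on $X$, and $\dist(x,\partial X) \lesssim |\rho(x)| \lesssim \dist(x,\partial X)$ nearby. For $\alpha$ slightly greater than $1$, the convex function $w(x) = h(x_0) - \delta + A(-\rho(x))^\alpha$ has $\det(D^2 w)$ comparable to $(-\rho)^{n(\alpha-1)-\alpha}$ near $\partial X$; choosing $\alpha$ so that this power equals $1$ (or is below it) and using $g \geq C_2$ together with the hypothesis $f \leq C_1 \dist(\cdot,\partial X)$ gives $g(\nabla w)\det(D^2 w) \geq f$ in a neighbourhood of $x_0$, after verifying that $\nabla w$ stays in a compact set where $g$ is bounded above. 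Extending $w$ by an affine function in the interior yields an Aleksandrov subsolution below $u_\epsilon$ that coincides with $h(x_0)$ at $x_0$; a symmetric construction with a linear (in fact affine) supersolution caps $u_\epsilon$ from above. This gives a uniform modulus of continuity at the boundary, independent of $\epsilon$.

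Third, I would invoke stability. Since $u_\epsilon$ is convex and converges locally uniformly to a convex $u$, the pushforward measures $(\nabla u_\epsilon)_\sharp(dx)$ converge weakly to $(\partial u)_\sharp(dx)$, so the associated \MA measures $g_\epsilon(\nabla u_\epsilon)\det(D^2 u_\epsilon)\,dx$ converge weakly to the \MA measure of $u$ weighted by $g$, matching $f\,dx$ in the limit. The boundary barriers ensure $u = h$ continuously on $\partial X$, and continuity on $\bar X$ follows from convexity in the interior combined with the boundary modulus.

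The main obstacle is the boundary continuity, which is exactly why the degeneracy hypothesis $f(x) \leq C_1 \dist(x,\partial X)$ appears: without it, convex Aleksandrov solutions can detach from $h$ at $\partial X$, because the prescribed mass near the boundary would force a jump discontinuity compatible with \eqref{eq:aleks}. The delicate point is choosing the exponent $\alpha$ and constant $A$ in the barrier so that the Monge–Ampère mass matches the linearly vanishing right-hand side and simultaneously keeps $\nabla w$ in a region where the Jacobian factor $g(\nabla w)$ stays uniformly positive and bounded; the nonlinearity in the gradient is handled precisely because the barrier has controlled slopes near $x_0$.
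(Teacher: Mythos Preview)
The paper does not prove this theorem: it is quoted as a special case of \cite[Theorem~12.1]{Bakelman_Elliptic} and used as a black box in the proof of Lemma~\ref{lem:subMass}. There is therefore no in-paper argument to compare against. Your overall strategy (regularise the data, obtain boundary barriers, pass to the limit using weak stability of the \MA measure) is the classical route and is in the spirit of Bakelman's own treatment, although Bakelman works directly with Perron-type constructions in the Aleksandrov class rather than via smooth approximants.

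That said, your barrier step has a concrete gap. With $\rho$ strictly convex and $\rho<0$ in $X$, the function $-\rho$ is concave, and for $w(x)=h(x_0)-\delta+A(-\rho(x))^\alpha$ one computes
\[
D^2w \;=\; A\alpha(\alpha-1)(-\rho)^{\alpha-2}\,\nabla\rho\otimes\nabla\rho \;-\; A\alpha(-\rho)^{\alpha-1}\,D^2\rho,
\]
which is negative definite in any direction orthogonal to $\nabla\rho$. Thus $w$ is not convex, $\det(D^2w)$ need not be nonnegative, and the comparison placing $w$ below $u_\epsilon$ is unavailable. The usual remedy is to build the lower barrier from an affine part matching $h$ near $x_0$ plus a genuinely convex correction (for instance $K\rho$ itself, or a convex function of the signed distance) whose \MA mass is calibrated to the linearly vanishing right-hand side $f\leq C_1\dist(\cdot,\partial X)$; the exponent trick should be applied to a convex profile, not to $-\rho$. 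A second, more structural issue is that your first step invokes smooth solvability of the Dirichlet problem for $g_\epsilon(\nabla u)\det(D^2u)=f_\epsilon$; the Caffarelli theory you cite treats $\det(D^2u)=F(x)$, and handling the gradient-dependent factor $g$ already requires the Bakelman/Urbas machinery you are trying to reproduce, so this step is close to circular unless you replace it by a Perron argument or a continuity method with a~priori estimates tailored to the $g(\nabla u)$ term.
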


%

\subsection{Viscosity solutions}\label{sec:viscosity}

The \MA equation belongs to a class of PDEs known as degenerate elliptic equations, which take the form
\[ F(x,u(x),\nabla u(x),D^2u(x)) = 0. \]

\begin{definition}[Degenerate elliptic]\label{def:elliptic}
The operator
$F:X\times\R\times\R^n\times\Sf^n\to\R$
is \emph{degenerate elliptic} if 
\[ F(x,u,p,A) \leq F(x,v,p,B) \]
whenever $u \leq v$ and $A \geq B$.
\end{definition}

The notion of the viscosity solution has become a very powerful tool for analysing fully nonlinear degenerate elliptic PDEs~\cite{CIL}.  The definition relies on a maximum principle argument that moves derivatives onto smooth test functions.  

Viscosity solutions of the equation 
\bq\label{eq:PDE} F(x,u(x),\nabla u(x),D^2 u(x))  = 0 \eq
are defined as follows.
\begin{definition}[Viscosity solution]\label{def:viscosity}
An upper (lower) semi-continuous function $u$ is a \emph{viscosity sub(super)solution} of~\eqref{eq:PDE} in $X$ if for every $\phi\in C^2$, whenever $u-\phi$ has a local maximum (minimum) at $x \in X$, then
\[ 
F_*^{(*)}(x,u(x),\nabla \phi(x),D^2\phi(x)) \leq (\geq)  0 
\]
where $F_*^{(*)}$ denotes the lower (upper) semi-continuous envelope of $F$.

A continuous function $u$ is a \emph{viscosity solution} of~\eqref{eq:PDE} if it is both a viscosity subsolution and a viscosity supersolution.
\end{definition}

By extending the operator $F$ to the boundary of the domain, we can also interpret boundary conditions in the viscosity sense.

It is not hard to show that Aleksandrov solutions of the \MA equation are also viscosity solutions of the \MA equation in open sets.  We obtain the following result through a trivial adaptation of~\cite[Proposition 1.3.4]{Gutierrez}
\begin{theorem}[Aleskandrov solutions are viscosity solutions]\label{thm:aleksVisc}
Let $u$ be a convex Aleksandrov solution of the \MA equation in $X$.  Then $u$ is a viscosity solution of the \MA equation in $X$.
\end{theorem}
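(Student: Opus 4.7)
The plan is to verify the viscosity subsolution and supersolution inequalities of Definition~\ref{def:viscosity} separately, by combining the Aleksandrov identity~\eqref{eq:aleks} with a perturbation and change-of-variables argument in the style of Gutiérrez. The only modification relative to the classical \MA equation is bookkeeping for the target density factor $g(\nabla u)$.

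For the subsolution direction, let $\phi \in C^2$ be such that $u - \phi$ has a local maximum at $x_0 \in X$. After subtracting a constant and replacing $\phi$ by $\phi + \epsilon|x-x_0|^2$ (with $\epsilon\to 0$ at the end), I may assume $\phi(x_0)=u(x_0)$, $\phi$ is strictly convex on a ball $B_r(x_0)$, and $\phi>u$ strictly on $\partial B_r(x_0)$. For small $\delta>0$, define the open set $V_\delta := \{x\in B_r(x_0) : u(x) > \phi(x)-\delta\}$, which is compactly contained in $B_r(x_0)$ and shrinks to $\{x_0\}$ as $\delta\to 0$. The key geometric step is the inclusion $\partial u(V_\delta) \subseteq \nabla\phi(V_\delta)$: for $y \in \partial u(z)$ with $z \in V_\delta$, the strictly convex function $G(x) := \phi(x) - y\cdot x$ satisfies $G(x) \geq u(z) - y\cdot z + \delta > G(z)$ for $x \in \partial V_\delta$ (combining the subgradient inequality $u(x) \geq u(z) + y\cdot(x-z)$ with the boundary condition $u=\phi-\delta$), so its minimum on $\overline{V_\delta}$ is attained at an interior critical point $z'$ where $\nabla\phi(z')=y$.

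Combining this inclusion with the Aleksandrov identity and the change of variables for the local diffeomorphism $\nabla\phi$ yields
\[
\int_{V_\delta} f(x)\,dx = \int_{\partial u(V_\delta)} g(y)\,dy \leq \int_{\nabla\phi(V_\delta)} g(y)\,dy = \int_{V_\delta} g(\nabla\phi(x))\det D^2\phi(x)\,dx.
\]
Dividing by $|V_\delta|$ and sending $\delta\to 0$, lower semicontinuity of $f$ gives $f(x_0) \leq \liminf_{\delta\to 0}|V_\delta|^{-1}\int_{V_\delta} f$, while continuity of $g\circ\nabla\phi$ and $\det D^2\phi$ yields $|V_\delta|^{-1}\int_{V_\delta} g(\nabla\phi)\det D^2\phi \to g(\nabla\phi(x_0))\det D^2\phi(x_0)$. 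Sending $\epsilon\to 0$ recovers the subsolution inequality for the original test function.

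The supersolution direction proceeds dually: for $\phi$ with $u-\phi$ attaining a local minimum at $x_0$, one perturbs to arrange $\phi < u$ strictly on $\partial B_r(x_0)$, defines $V_\delta := \{u < \phi + \delta\}$, and establishes the reverse inclusion $\nabla\phi(V_\delta) \subseteq \partial u(V_\delta)$ by the analogous affine-support argument applied to the convex function $H(x) := u(x) - \nabla\phi(z)\cdot x$. Together with the upper semicontinuity of $g$, this yields the reversed inequality and hence the supersolution bound. When $D^2\phi(x_0)$ fails to be positive semi-definite the supersolution inequality holds automatically via the envelope $F^*$ in Definition~\ref{def:viscosity}, so one may restrict to test functions that can be perturbed to be locally convex at $x_0$. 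The principal obstacle throughout is arranging the strict separation between $u$ and $\phi$ on $\partial B_r(x_0)$ so that $V_\delta$ stays compactly inside $B_r(x_0)$ and the minimization yielding $z'$ does not escape to the boundary; both this and the local convexity needed for the change of variables are supplied by the quadratic perturbation. The semicontinuity hypotheses on $f$ and $g$ in Hypothesis~\ref{hyp} are precisely what is needed to pass to the limit in both sides of each inequality.
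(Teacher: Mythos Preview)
Your argument is precisely the ``trivial adaptation of \cite[Proposition~1.3.4]{Gutierrez}'' that the paper invokes in place of a proof, so the approach coincides with the paper's.

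Two small points deserve tightening. First, in the supersolution direction your closing sentence claims the quadratic perturbation supplies \emph{both} the strict separation $\phi<u$ on $\partial B_r(x_0)$ \emph{and} the local convexity of $\phi$; but the perturbation that yields strict separation is $\phi\mapsto\phi-\epsilon|x-x_0|^2$, which \emph{reduces} convexity. The inclusion $\nabla\phi(V_\delta)\subset\partial u(V_\delta)$ therefore requires $D^2\phi(x_0)>0$ strictly so that $D^2\phi-2\epsilon I>0$ for small $\epsilon$. The remaining degenerate case $D^2\phi(x_0)\geq 0$ with $\det D^2\phi(x_0)=0$ should be dispatched separately: there the supersolution inequality reduces to $f^*(x_0)\geq 0$, which is immediate from $f\geq 0$. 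Second, $g$ is only upper semicontinuous under Hypothesis~\ref{hyp}, so the phrase ``continuity of $g\circ\nabla\phi$'' is not accurate; in the subsolution limit you should instead use $\limsup_{\delta\to 0}|V_\delta|^{-1}\int_{V_\delta} g(\nabla\phi)\det D^2\phi \leq g(\nabla\phi(x_0))\det D^2\phi(x_0)$, and dually $g_*$ in the supersolution direction. With these two corrections the proof goes through as written.
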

The converse of this theorem cannot be so trivially adapted from the results of~\cite{Gutierrez}, particularly since we allow density functions that vanish and/or are discontinuous.

\subsection{Comparison principle}\label{sec:comparison}
A key property of many elliptic operators is a comparison principle, which is used to prove uniqueness and existence results, and plays a key role in proving that monotone approximation schemes are convergent~\cite{BSnum}.  The classical form of the comparison principle allows us to compare subsolutions and supersolutions in the interior of the domain using information from the boundary.
Under appropriate assumptions on the data, the \MA equation does indeed possess this form of comparison principle.
\begin{theorem}[Comparison principle for \MA~{\cite[Theorem V.2]{IshiiLions}}]\label{thm:comparisonMA}
Let $X$ be a convex domain and let $f, g$ be non-negative and locally Lipschitz continuous with $g$ bounded away from zero.  Suppose that $u$ is a viscosity subsolution and $v$ a viscosity supersolution of the \MA equation. Then
\[ \sup\limits_X\{u-v\} = \sup\limits_{\partial X}\{u-v\}. \]
\end{theorem}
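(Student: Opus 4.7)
The plan is to argue by contradiction using the Ishii--Lions doubling-of-variables technique, standard for second-order degenerate elliptic comparison. Suppose $M := \sup_X (u-v) > \sup_{\partial X}(u-v)$; then $M$ is attained at some interior point. To obtain a strict inequality that will survive passage to the limit, I would first reduce to the case where $u$ is a \emph{strict} subsolution. Since $g$ is bounded away from zero, the perturbed function $\tilde u(x) = u(x) + \varepsilon(|x|^2/2 - C)$ satisfies, in the viscosity sense,
\[
g(\nabla \tilde u)\det(D^2\tilde u) \geq f(x) + \delta(\varepsilon)
\]
for a constant $\delta(\varepsilon)>0$, because adding $\varepsilon I$ to the Hessian increases $\det$ by a definite amount when the original Hessian is positive semidefinite (and in the viscosity sense, test functions touching from above inherit this bound). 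Choosing $\varepsilon$ small enough that the interior supremum of $\tilde u - v$ still exceeds its boundary supremum reduces the problem to a strict subsolution and an ordinary supersolution.

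Next, I would double the variables by considering
\[
\Phi_\alpha(x,y) = \tilde u(x) - v(y) - \tfrac{\alpha}{2}|x-y|^2
\]
on $\bar X \times \bar X$, and let $(x_\alpha,y_\alpha)$ be a maximiser. Standard estimates show $\alpha|x_\alpha - y_\alpha|^2 \to 0$, $(x_\alpha, y_\alpha)$ clusters at interior maxima of $\tilde u - v$, and therefore lies in $X \times X$ for $\alpha$ sufficiently large. The Crandall--Ishii lemma then furnishes symmetric matrices $A_\alpha, B_\alpha$ with the ordering
\[
\begin{pmatrix} A_\alpha & 0 \\ 0 & -B_\alpha \end{pmatrix} \leq 3\alpha\begin{pmatrix} I & -I \\ -I & I \end{pmatrix},
\]
which in particular implies $A_\alpha \leq B_\alpha$, together with $(p_\alpha, A_\alpha) \in \overline{J}^{2,+}\tilde u(x_\alpha)$ and $(p_\alpha, B_\alpha) \in \overline{J}^{2,-}v(y_\alpha)$, where $p_\alpha = \alpha(x_\alpha - y_\alpha)$.

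Inserting these into the viscosity inequalities for $\tilde u$ and $v$ yields
\[
f(x_\alpha) + \delta(\varepsilon) \leq g(p_\alpha)\det(A_\alpha), \qquad g(p_\alpha)\det(B_\alpha) \leq f(y_\alpha).
\]
Because $v$ is a viscosity supersolution, the matrix $B_\alpha$ must be positive semidefinite at contact points (this is where one uses that MA is only elliptic on the cone of nonnegative matrices, typically handled by reformulating $F$ via $(\det_+)^{1/n}$ so that the supersolution property forces $B_\alpha \geq 0$). Then $0 \leq A_\alpha \leq B_\alpha$ gives $\det A_\alpha \leq \det B_\alpha$, and combining the two inequalities yields
\[
f(x_\alpha) + \delta(\varepsilon) \leq f(y_\alpha).
\]
Sending $\alpha \to \infty$ and using that $|x_\alpha - y_\alpha|\to 0$ together with the (local Lipschitz) continuity of $f$ produces the contradiction $\delta(\varepsilon) \leq 0$.

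The main obstacle is handling the degenerate ellipticity: ordinary Crandall--Ishii gives $A_\alpha \leq B_\alpha$ but $\det$ is only monotone on the positive cone, so one must carry along the convex/positive-semidefinite structure throughout. The cleanest route, and the one used by Ishii--Lions, is to work with the \emph{concave} reformulation $F(x,p,A) = f(x)^{1/n} - g(p)^{1/n}(\det A_+)^{1/n}$ on all symmetric matrices; concavity of $(\det)^{1/n}$ on positive matrices combined with the lower bound on $g$ converts the matrix ordering into a usable scalar inequality, and the strict-subsolution perturbation above provides the positive gap $\delta(\varepsilon)$ needed to close the argument.
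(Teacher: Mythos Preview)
The paper does not supply its own proof of this theorem: it is stated with attribution to Ishii--Lions \cite[Theorem~V.2]{IshiiLions} and then used as a black box (notably inside the proof of Lemma~\ref{lem:subMass}). There is therefore nothing in the paper to compare your argument against directly.

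That said, your sketch is essentially the Ishii--Lions argument itself: strict-subsolution perturbation, doubling of variables, Crandall--Ishii matrix lemma, and the concave $(\det_+)^{1/n}$ reformulation to exploit $A_\alpha \le B_\alpha$ on the positive cone. Two places deserve a little more care. First, the claim that the supersolution property forces $B_\alpha \ge 0$ is not automatic from the raw inequality $g(p_\alpha)\det(B_\alpha)\le f(y_\alpha)$; it genuinely requires working with the extended operator $F(x,p,A)=f(x)^{1/n}-g(p)^{1/n}(\det_+ A)^{1/n}$ (or an equivalent device), so that part of the sketch should not be presented as a consequence and only afterwards patched by the reformulation---the reformulation is where the whole argument lives. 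Second, the strict-subsolution step is cleaner in the $1/n$-power formulation: Minkowski's inequality gives $\det(D^2\phi+\varepsilon I)^{1/n}\ge \det(D^2\phi)^{1/n}+\varepsilon$, which together with the lower bound on $g$ yields a uniform gap, whereas your stated inequality $g(\nabla\tilde u)\det(D^2\tilde u)\ge f+\delta(\varepsilon)$ at the determinant level is awkward to justify without passing through the $1/n$ power anyway. With those adjustments the outline is sound and matches the cited source.
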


However, the powerful Barles-Souganidis convergence framework~\cite{BSnum}--and indeed many uniqueness results--require a stronger form of the comparison principle that involves interpreting the boundary conditions in the viscosity sense.
\begin{definition}[Comparison principle]\label{def:comparisonStrong}
Let the PDE operator $F$ be defined on $\bar{X}\times \R \times \R^n \times \Sf^n$.  The PDE~\eqref{eq:PDE} has a comparison principle if whenever $u$ is a viscosity subsolution and $v$ a viscosity supersolution then $u \leq v$ on $\bar{X}$.
\end{definition}

Unfortunately, many elliptic PDEs do not satisfy this strong form of the comparison principle.  For the Dirichlet problem, for example, the \MA equation does not always admit a continuous solution.  In this setting, the Dirichlet boundary condition is interpreted in a weak sense, and subsolutions need not lie below super-solutions.  Here, the violation of the comparison principle is relatively mild, occurring only at boundary points.  In the interior, sub and super-solutions remain ordered, and monotone approximation schemes are guaranteed to converge in the interior of the domain~\cite{Hamfeldt_Gauss}.

The situation becomes much more delicate for the second boundary value problem.  To illustrate, we consider the one-dimensional problem of mapping the uniform density $f(x)=1$ on the line segment $X = (-1,1)$ back onto itself ($g=f$ and $Y=X$).  We use the convex defining function $H(y) = \abs{y}-1$.  The solution to this problem is the identity map, which has potential
\bq\label{eq:sol1d} u(x) = \frac{1}{2}x^2 + C \eq
where $C$ is any constant.

In order to obtain a unique solution (which is certainly necessary for the comparison principle), we need to include an additional condition.  Many options are possible; here we consider three popular options.

\bq\label{eq:ex1}\tag{Ex. 1}
F(x,u,u',u'') = 
\begin{cases}
-u'' + 1, & x \in (-1,1)\\
\abs{u'}-1-\langle u \rangle, & x = \pm 1.
\end{cases}
\eq

\bq\label{eq:ex2}\tag{Ex. 2}
F(x,u,u',u'') = 
\begin{cases}
-u'' + 1, & x \in (-1,1)\\
\abs{u'}-1-u(1), & x = \pm 1.
\end{cases}
\eq

\bq\label{eq:ex3}\tag{Ex. 3}
F(x,u,u',u'') = 
\begin{cases}
-u'' + 1, & x \in (-1,1)\\
\abs{u'}-1, & x = \pm 1
\end{cases}
\quad\quad \langle u \rangle = 0.
\eq

In the first example, the boundary points are mapped onto points satisfying $H(y) = \langle u \rangle$, and the mean-zero condition $\langle u\rangle = 0$ is enforced indirectly via mass balance. Similarly, example two fixes the value of one point $u(1) = 0$.  In the third example, the operator is defined only on functions satisfying the mean-zero condition.  In each case, the quadratic function~\eqref{eq:sol1d} satisfies the boundary value problem with $C = -1/6$ (examples 1 and 3) or $C = -1/2$ (example 2).  

We remark that all of these operators actually include a non-local part (either the average value of $u$ or $u$ at a fixed location in the domain).  Thus none of these strictly satisfy the usual definition of a (local) elliptic operator (Definition~\ref{def:elliptic}).  In particular, the modification necessary to ensure uniqueness sacrifices the comparison principle.

We notice that the functions $u(x)$ are solutions, and therefore also subsolutions, of the second boundary value problem.  We can also construct super-solutions.  For example, the function $v(x) = -2x$ is a super-solution of all three operators (and $\langle v \rangle = 0$, so it is admissible in example 3).  However, it is \emph{not} the case that $u(x) \leq v(x)$ in $(-1,1)$; see Figure~\ref{fig:noComparison}.  That is, these characterisations of the second boundary value problem do \emph{not} satisfy a comparison principle.

\begin{figure}
{\subfigure[]{\includegraphics[width=0.45\textwidth]{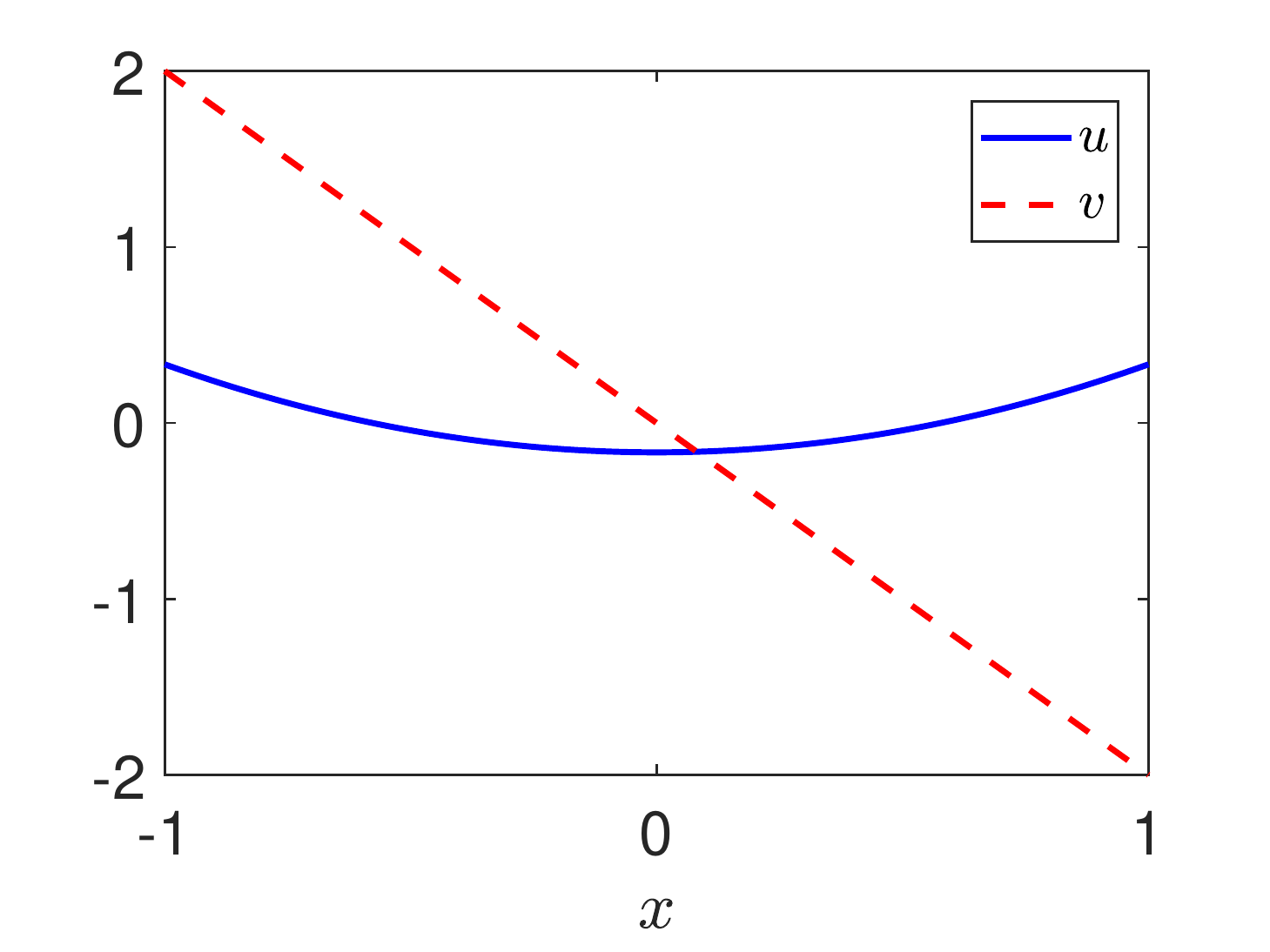}\label{fig:ex1}}}
{\subfigure[]{\includegraphics[width=0.45\textwidth]{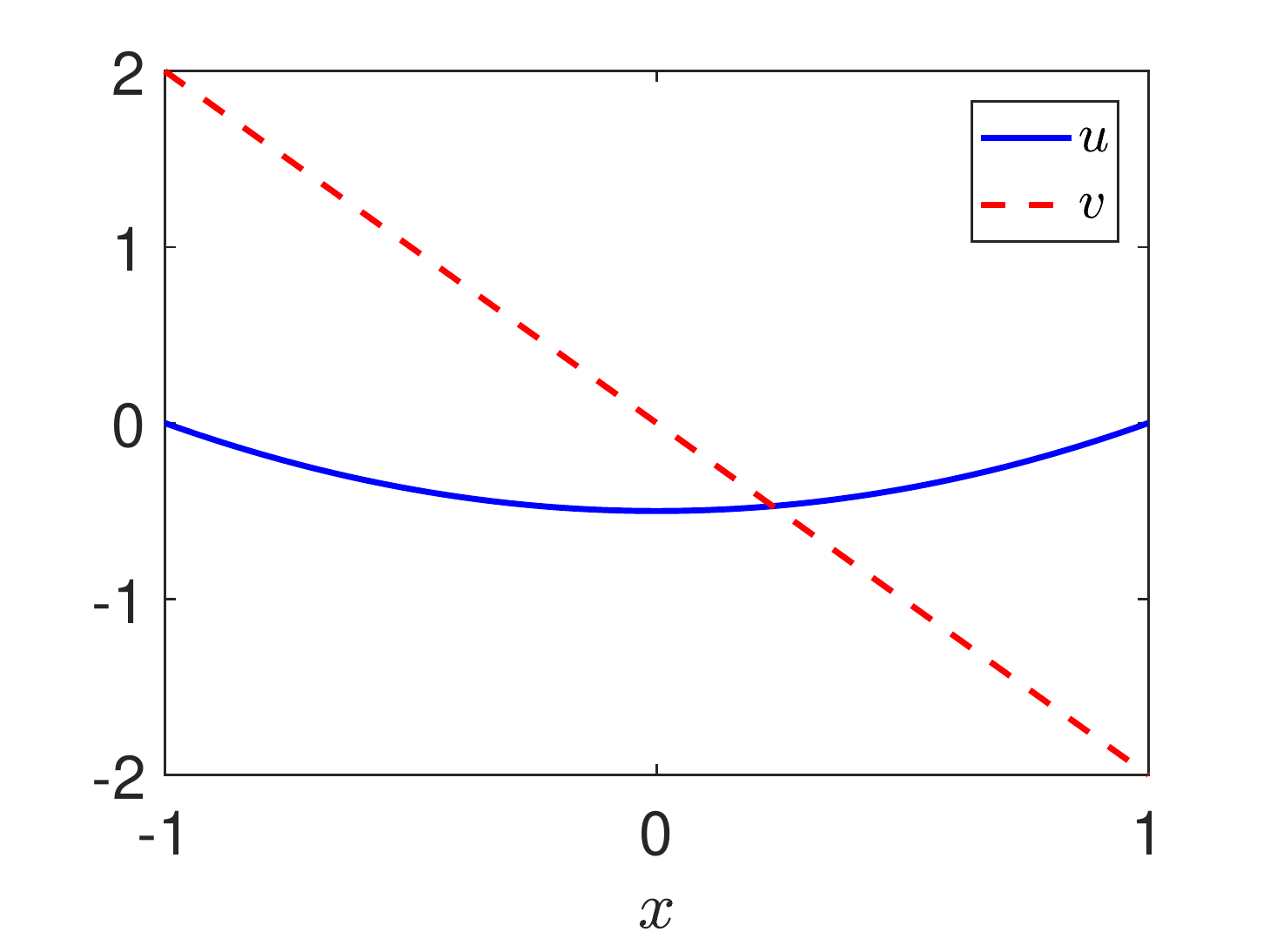}\label{fig:ex2}}}
\caption{Subsolution $u$ and supersolution $v$ of \subref{fig:ex1}~\eqref{eq:ex1}, \eqref{eq:ex3}, and \subref{fig:ex2}~\eqref{eq:ex2}.}
\label{fig:noComparison}
\end{figure}

\section{Alternate form of the PDE}\label{sec:subsolutions}
We propose an alternate form of the PDE that incorporates the Monge-Amp\`ere equation and boundary constraint into a single equation posed throughout the domain. As in earlier examples, this new equation does not satisfy a comparison principle, and thus we cannot apply the Barles-Souganidis convergence framework~\cite{BSnum}. However, we show that this new equation requires subsolutions to be unique (up to additive constants).  The uniqueness of subsolutions will be used to propose an alternate convergence framework in \autoref{sec:approx}.

\subsection{Viscosity subsolutions of modified PDE}\label{sec:PDE}

The examples of the previous section indicate that traditional arguments relying on the comparison principle will not be effective for establishing the convergence of approximation schemes for the second boundary value problem for the \MA equation.  We propose an alternate approach, which enforces the transport constraint~\eqref{eq:bvp2} not as a boundary condition, but rather as a condition that must be satisfied in the interior of the domain.  This will be accomplished by requiring the convex function $u$ to simultaneously be a subsolution of two different equations.

We will again make use of the defining function $H(y)$ for the target set (Definition~\ref{def:defining}).  Notice that the transport constraint~\eqref{eq:bvp2} is equivalent to the condition
\bq\label{eq:HJsub}
x\in X, \, y \in \partial u(x) \, \Rightarrow \, H(y) \leq 0.
\eq
Thus formally we expect the optimal transport potential to be a subsolution of the Hamilton-Jacobi equation
\bq\label{eq:HJ}
H(\nabla u(x)) = 0, \quad x \in X.
\eq

At the same time, we expect the convex function $u$ to be a solution, and therefore also a subsolution, of the \MA equation
\bq\label{eq:MA2}
-g(\nabla u(x))\det(D^2u(x)) + f(x) = 0, \quad x \in X.
\eq
A key observation we make is that if $u$ is a convex subsolution of both~\eqref{eq:HJ} and~\eqref{eq:MA2}, it is automatically a solution of~\eqref{eq:MA2}.  Intuitively, this is because strict subsolutions of the \MA equation generate ``too much'' mass:
\[ g(\nabla u(x))\det(D^2u(x)) > f(x). \]
On the other hand, strict subsolutions of the Hamilton-Jacobi equation constrain mass to be mapped inside the target set $Y$ and thus generates ``too little'' mass.  Enforcing both constraints simultaneously requires that the map $\nabla u$ produces an amount of mass that is ``just right'' and thus the \MA inequality is forced to attain equality.  See Figure~\ref{fig:massbalance}.

\begin{figure}
{\subfigure[]{\includegraphics[width=0.45\textwidth,clip=true,trim=0.5in 0 0.5in 0]{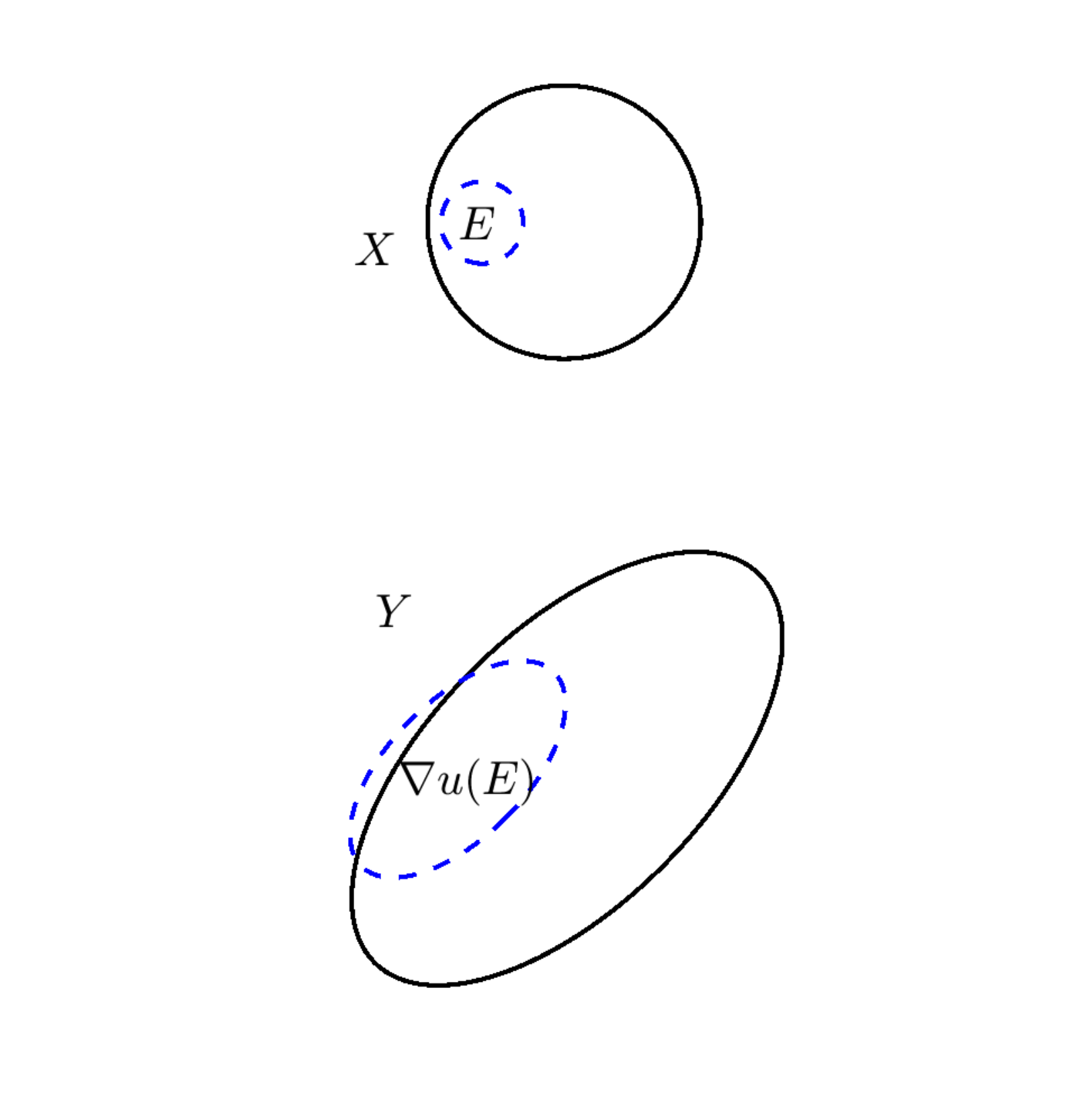}\label{fig:toomuch}}}
{\subfigure[]{\includegraphics[width=0.45\textwidth,clip=true,trim=0.5in 0 0.5in 0]{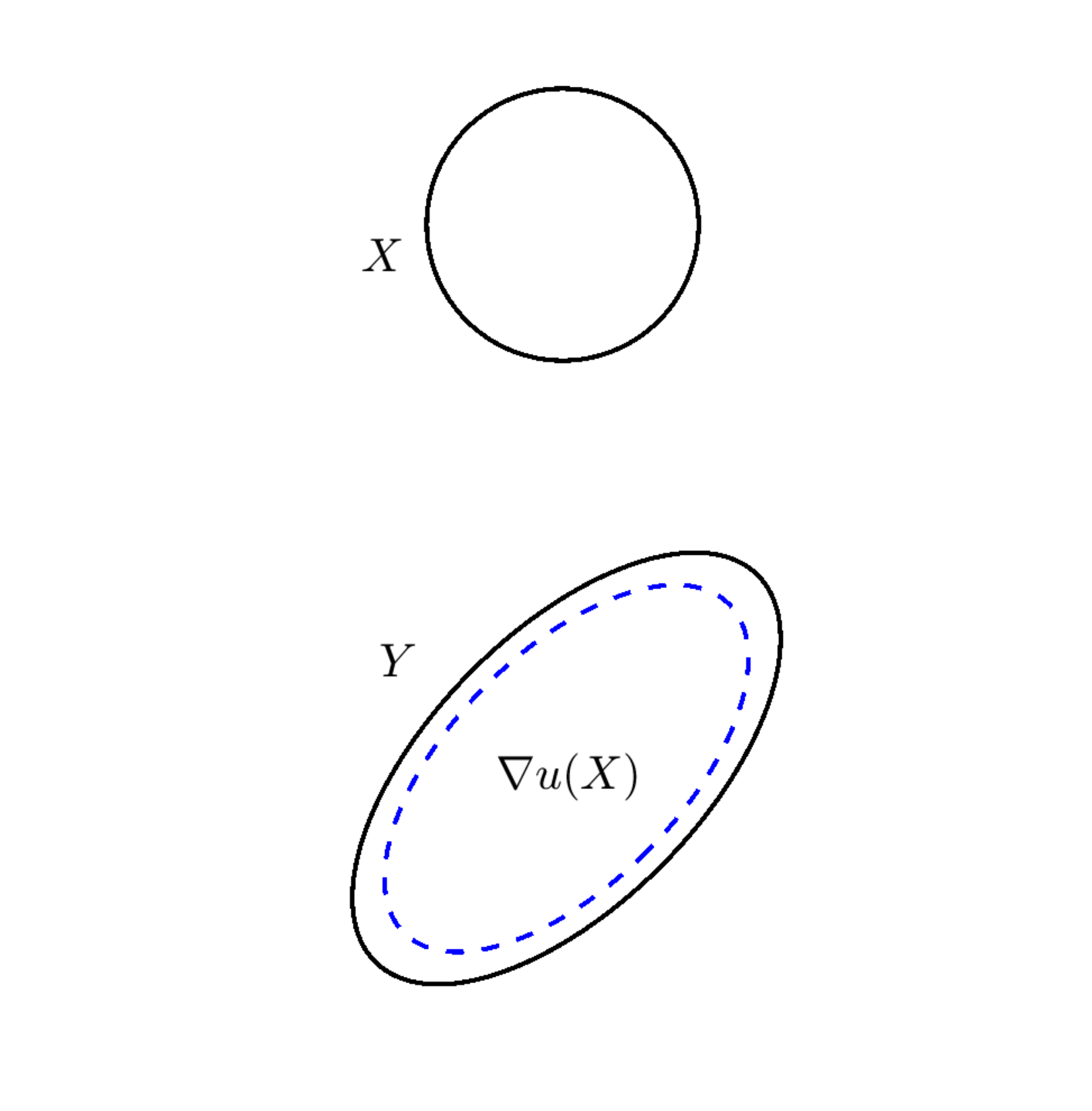}\label{fig:toolittle}}}
\caption{\subref{fig:toomuch}~Strict subsolutions of the \MA equation~\eqref{eq:MA2} satisfy $\int_{\partial u(E)}g(y)\,dy > \int_E f(x)\,dx$ and produce ``too much'' mass. \subref{fig:toolittle}~Strict subsolutions of the Hamilton-Jacobi equation~\eqref{eq:HJ} satisfy $\nabla u(X) \subset Y$ and produce ``too little'' mass.}
\label{fig:massbalance}
\end{figure}

There is one additional constraint we need to enforce: convexity of the solution $u$.  Formally, this means that the Hessian $D^2u(x)$ should be positive semi-definite and all eigenvalues of the Hessian non-negative.  If $\lambda_1(D^2u(x))$ denotes the smallest eigenvalue of the Hessian, this condition can be enforced by requiring $u$ to be a subsolution of the convex envelope equation described in~\cite{ObermanCE}
\bq\label{eq:convex}
-\lambda_1(D^2u(x)) = 0.
\eq
We remark that in regions with zero mass ($f=0$), this is formally equivalent to the equations used to produce a local representation of the minimal convex extension described in~\cite{BenamouDuval_MABVP2}.

Thus we propose looking for a function $u$ that is a viscosity subsolution of three different PDEs: the \MA equation~\eqref{eq:MA}, the Hamilton-Jacobi equation~\eqref{eq:HJ}, and the convex envelope equation~\eqref{eq:convex}.  These requirements can be satisfied by seeking a viscosity subsolution of the single combined PDE
\bq\label{eq:MAOT}
\max\left\{-g(\nabla u(x))\det(D^2u(x))+f(x), -\lambda_1(D^2u(x)), H(\nabla u(x))\right\} = 0, \quad x\in X.
\eq

The next subsection is devoted to proving the following key theorem.
\begin{theorem}[Viscosity subsolutions solve OT problem]\label{thm:subsolutions}
Let $u:X\to\R$ be an upper semi-continuous viscosity subsolution of~\eqref{eq:MAOT} where the data satisfies Hypothesis~\ref{hyp}.  Then $u$ is uniquely defined on $supp(f)$ up to additive constants and the subgradient map $\partial u$ solves the optimal transport problem.
\end{theorem}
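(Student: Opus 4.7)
The plan is to exploit the three subsolution properties implicit in the combined PDE~\eqref{eq:MAOT} to show that any such $u$ is necessarily a convex Aleksandrov solution of~\eqref{eq:MA} satisfying the transport constraint~\eqref{eq:bvp2}; the conclusion then follows immediately from Theorem~\ref{thm:aleksExist}. First, the subsolution inequality for $-\lambda_1(D^2 u) = 0$ forces $u$ to be convex: if $u$ were not convex in some direction at some point, a quadratic test function touching $u$ from above could be constructed with a negative eigenvalue, violating the subsolution condition. This gives access to the standard regularity of finite convex functions, in particular twice-differentiability almost everywhere (Aleksandrov's theorem) and a nonempty subdifferential $\partial u(x)$ at every interior point.

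Next, I would use the Hamilton-Jacobi subsolution property $H(\nabla u) \le 0$ to establish the transport constraint $\partial u(X) \subset \bar{Y}$. At any point $x$ of twice-differentiability of $u$, the quadratic function
\[
\phi(z) = u(x) + \nabla u(x)\cdot(z-x) + \tfrac{1}{2}(z-x)^T (D^2 u(x) + \epsilon I)(z-x)
\]
touches $u$ from above at $x$ for every $\epsilon>0$, so the subsolution inequality yields $H(\nabla u(x)) \leq 0$, i.e.\ $\nabla u(x) \in \bar Y$. Since $\bar Y$ is closed and convex by~(H1), and since for a convex function $\partial u(x)$ equals the closed convex hull of limits of gradients at nearby differentiability points (Rockafellar), we conclude that $\partial u(x) \subset \bar Y$ for every $x \in X$.

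The main technical step is to convert the viscosity subsolution inequality for the Monge-Amp\`ere part into the Aleksandrov measure inequality
\[
\int_{\partial u(E)} g(y)\,dy \geq \int_E f(x)\,dx, \qquad E \subset X \text{ measurable}.
\]
Under smooth, non-degenerate data this translation is classical (see Guti\'errez), but here $g$ is only upper semicontinuous and may vanish while $f$ is only lower semicontinuous, so some care is required. The plan is to approximate $f$ from below by continuous $f_k \uparrow f$ and $g$ from above by continuous, strictly positive, bounded $g_k \downarrow g$, verify that $u$ remains a viscosity subsolution of $-g_k(\nabla u)\det(D^2u) + f_k \le 0$, apply the classical result in the regularized setting, and pass to the limit via monotone convergence on both sides of the inequality. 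This step is where I expect the bulk of the technical work; the degeneracy and lack of continuity of the data make a direct pointwise argument insufficient, so one must carefully track the mass of $\partial u(E)$ under the approximations.

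Finally, I would combine these facts with the mass balance hypothesis~(H4): taking $E = X$,
\[
\int_Y g\,dy \;=\; \int_X f\,dx \;\leq\; \int_{\partial u(X)} g\,dy \;\leq\; \int_{\bar Y} g\,dy \;=\; \int_Y g\,dy,
\]
using $\partial u(X) \subset \bar Y$ together with (H3) for the last equality. Hence equality holds throughout, and a standard measure-theoretic argument (if $\mu \geq \nu$ as finite measures on $X$ with $\mu(X)=\nu(X)$, then $\mu = \nu$) upgrades the measure inequality to equality for every measurable $E \subset X$. Thus $u$ is a convex Aleksandrov solution of~\eqref{eq:MA} satisfying~\eqref{eq:bvp2}, and Theorem~\ref{thm:aleksExist} delivers uniqueness on $\mathrm{supp}(f)$ up to additive constants together with the identification of $\partial u$ as the optimal transport map.
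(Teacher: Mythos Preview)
Your four-step outline---convexity from $-\lambda_1(D^2u)\le 0$, transport constraint from $H(\nabla u)\le 0$, the Aleksandrov mass inequality from the \MA part, then mass balance plus Theorem~\ref{thm:aleksExist}---is exactly the paper's decomposition (Lemmas~\ref{lem:convex}, \ref{lem:OTconstraint}, \ref{lem:subMass}--\ref{lem:viscAleks}). Steps 1, 2, and 4 of your proposal agree with the paper essentially verbatim.

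The one place where your proposal is too vague is step 3. You plan to regularize $f,g$ and ``apply the classical result in the regularized setting'', but the standard Guti\'errez equivalence is stated for $\det(D^2u)=f$, not for $g(\nabla u)\det(D^2u)=f$, and you have not said what replaces it. The paper's mechanism is a \emph{comparison} argument rather than a direct pointwise-plus-area-formula argument: on a uniformly convex $E\Subset X$ it regularizes to Lipschitz $f_\epsilon\le f$ (vanishing near $\partial E$) and $g_\epsilon\ge g$ bounded away from zero, uses Bakelman's existence theorem (Theorem~\ref{thm:aleksDirichlet}) to produce a continuous Aleksandrov solution $v_\epsilon$ of the regularized Dirichlet problem with boundary data $u|_{\partial E}$, observes that $u$ is a viscosity subsolution and $v_\epsilon$ a viscosity supersolution of that same regularized equation, and invokes the Ishii--Lions comparison principle (Theorem~\ref{thm:comparisonMA}, which needs exactly the Lipschitz/positivity regularization) to obtain $u\le v_\epsilon$ in $E$ with equality on $\partial E$. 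That ordering yields $\partial v_\epsilon(E)\subset\partial u(E)$, and the mass inequality then drops out by direct computation and $\epsilon\to 0$. This Dirichlet-comparison bridge is the one concrete idea your proposal is missing; once it is in place, the rest of your plan goes through as written.
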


\begin{remark}
The proof that a viscosity subsolution exists is deferred to sections~\ref{sec:approx}-\ref{sec:construction}, where suitable approximation schemes are used to constructively demonstrate existence.
\end{remark}

\subsection{Equivalence with optimal transport problem}\label{sec:equivalence}

We now build up several lemmas that will be used to prove Theorem~\ref{thm:subsolutions}.

We first note that since we are only interested in viscosity subsolutions, we only need to compute the lower envelope $F_*$ of the PDE operator in Definition~\ref{def:viscosity}.  This can be simplified because of the semi-continuity of the data $f$, $g$.
\begin{lemma}
Let $u$ be an upper semi-continuous subsolution of~\eqref{eq:MAOT} where the data satisfies Hypothesis~\ref{hyp}.  Suppose also that for some $\phi\in C^2$, the function $u-\phi$ has a strict local maximum at $x\in X$.  Then 
\[ \max\{-g(\nabla \phi(x))\det(D^2\phi(x))+f(x), -\lambda_1(D^2\phi(x)), H(\nabla\phi(x))\} \leq 0. \]  
\end{lemma}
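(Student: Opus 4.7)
The plan is to show that the operator
\[
F(x,p,A) := \max\{-g(p)\det(A) + f(x),\ -\lambda_1(A),\ H(p)\}
\]
coincides with its lower semicontinuous envelope $F_*$ at the test point $(x,\nabla\phi(x),D^2\phi(x))$. Once this is established, the subsolution condition from Definition~\ref{def:viscosity}, which gives $F_* \leq 0$ at this point, immediately yields the desired inequality $F \leq 0$. I would proceed in three steps.

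First, observe that the last two arguments of the max, $-\lambda_1(A)$ and $H(p)$, are continuous functions of their arguments, hence automatically LSC. Because $F \geq -\lambda_1(A)$ pointwise, the subsolution condition $F_*(x,\nabla\phi(x),D^2\phi(x)) \leq 0$ forces $-\lambda_1(D^2\phi(x)) \leq 0$, so $D^2\phi(x)$ is positive semi-definite and in particular $\det(D^2\phi(x)) \geq 0$. This is the key structural information that the test Hessian must lie in the PSD cone.

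Second, for the first component $G(x,p,A) := -g(p)\det(A) + f(x)$, I would combine the lower semicontinuity of $f$ (H2) and the upper semicontinuity of $g$ (H3) to show that $G$ is LSC at the test point. The main algebraic fact is that if $g \geq 0$ is USC and $\det(A) \geq 0$ is continuous, then the product $(p,A)\mapsto g(p)\det(A)$ is USC at $(p_0,A_0)$ provided $\det(A_0) \geq 0$: local boundedness of $g$ (a consequence of USC at a finite value) handles the case $\det(A_0)=0$, and for $\det(A_0)>0$ one estimates
\[
\limsup_{n\to\infty} g(p_n)\det(A_n) \leq (g(p_0)+\varepsilon)\det(A_0)
\]
and lets $\varepsilon\to 0$. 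Negating and adding the LSC inequality $\liminf f(x_n) \geq f(x_0)$ gives the LSC of $G$ at the test point. Since the max of finitely many LSC functions is LSC, $F$ itself is LSC at $(x,\nabla\phi(x),D^2\phi(x))$, so $F_* = F$ there and the claim follows.

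The main obstacle is the USC-continuous product calculus for the term $g(p)\det(A)$, which is not LSC in general but only under the sign condition $\det(A_0) \geq 0$. This is precisely why Step~1, extracting positive semi-definiteness of $D^2\phi(x)$ from the $-\lambda_1$ component, is essential before attacking the \MA component. The strict local maximum hypothesis plays no active role in this particular LSC computation, but it is consistent with the equivalent formulation of viscosity subsolutions used in the subsequent lemmas.
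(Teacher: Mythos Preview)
Your proposal is correct and follows essentially the same approach as the paper's proof: first extract positive semi-definiteness of $D^2\phi(x)$ from the continuous component $-\lambda_1(A)$ via $F_*\le 0$, then use the sign condition $\det(D^2\phi(x))\ge 0$ together with the lower semicontinuity of $f$, upper semicontinuity of $g$, and continuity of $H$ to conclude $F_*=F$ at the test point. The paper compresses your Step~2 into a single sentence, but the underlying argument is identical.
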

\begin{proof}
From Definition~\ref{def:viscosity}, we must have that
\[ -\lambda_1(D^2\phi(x)) \leq F_*(x,u(x),\nabla\phi(x),D^2\phi(x)) \leq 0. \]
Thus $\det(D^2\phi(x)) \geq 0$ and the result follows immediately from the lower semi-continuity of $f$, the upper semi-continuity of $g$, and the continuity of $H$.
\end{proof}

The convexity constraint is automatically enforced by subsolutions, as demonstrated in the previous works~\cite[Lemma~3.4]{Hamfeldt_Gauss} and~\cite[Theorem~1]{ObermanCE}.
\begin{lemma}[Subsolutions are convex]~\label{lem:convex}
Let $u:{X}\to\R$ be an upper semi-continuous subsolution of~\eqref{eq:MAOT}.  Then $u$ is convex.
\end{lemma}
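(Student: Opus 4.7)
The plan is to reduce the statement to the already-established fact that an upper semi-continuous viscosity subsolution of the convex envelope equation $-\lambda_1(D^2 u) = 0$ must be convex; this is proved in \cite[Theorem 1]{ObermanCE} and in \cite[Lemma 3.4]{Hamfeldt_Gauss}, and both references are cited precisely for this lemma.

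The first step is to observe that a viscosity subsolution of a pointwise maximum of operators is automatically a subsolution of each operator appearing in the maximum. Concretely, if $\phi\in C^2$ and $u-\phi$ attains a local maximum at $x\in X$, then by Definition~\ref{def:viscosity} the lower semi-continuous envelope of the whole max operator at $(x,u(x),\nabla\phi(x),D^2\phi(x))$ is $\leq 0$, and since $-\lambda_1(D^2\phi(x))$ appears inside the max and is continuous in $D^2\phi$, we get
\[ -\lambda_1(D^2\phi(x)) \leq 0. \]
Hence $u$ is a viscosity subsolution of $-\lambda_1(D^2 u)=0$ on $X$, and the cited results give convexity.

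If the goal were instead to produce the argument from scratch, I would argue by contradiction. Suppose $u$ is not convex. Then there exist $x_0,x_1\in X$ and $t_0\in(0,1)$ with
\[ u(t_0 x_0+(1-t_0)x_1) > t_0 u(x_0)+(1-t_0)u(x_1). \]
Restricting $u$ to the closed segment $[x_0,x_1]$, upper semi-continuity guarantees that the difference between $u$ and the chord through $(x_0,u(x_0))$ and $(x_1,u(x_1))$ attains its maximum at some interior point $\bar x$, where the excess is strictly positive. One can then add a small strictly concave paraboloid in the direction $e:=(x_1-x_0)/|x_1-x_0|$ together with a shallow convex paraboloid in the orthogonal directions and a tilt, producing a $C^2$ test function $\phi$ that touches $u$ from above at $\bar x$ (indeed attains a strict local maximum of $u-\phi$ there) and satisfies $\lambda_1(D^2\phi(\bar x)) < 0$. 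This contradicts the subsolution inequality.

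The step I expect to be the main obstacle, if one were genuinely constructing the argument rather than citing, is precisely the test-function construction at $\bar x$: one must arrange the maximum of $u-\phi$ to be \emph{strict} despite $u$ being only upper semi-continuous and despite wanting $\phi$ to be concave only in a single direction. This is handled cleanly in \cite{ObermanCE} by working with the convex envelope of $u$ on the segment and in \cite{Hamfeldt_Gauss} by a localisation argument, and invoking either completes the proof.
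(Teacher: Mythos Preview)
Your proposal is correct and matches the paper's own treatment exactly: the paper does not give a standalone proof but simply cites \cite[Lemma~3.4]{Hamfeldt_Gauss} and \cite[Theorem~1]{ObermanCE}, precisely the two references you invoke. Your write-up is in fact more detailed than the paper's, since you make explicit the reduction step (a subsolution of the maximum is a subsolution of each summand, in particular of $-\lambda_1(D^2u)=0$) and sketch the underlying contradiction argument, neither of which the paper spells out.
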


Next we demonstrate that subgradient maps induced by subsolutions do transport all mass into the target set.
\begin{lemma}[Subsolutions satisfy optimal transport constraint]\label{lem:OTconstraint}
Let $u:{X}\to\R$ be an upper semi-continuous subsolution of~\eqref{eq:MAOT}.  Then $\partial u(X) \subset \bar{Y}$.
\end{lemma}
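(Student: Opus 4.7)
The plan is to reduce the set-valued statement $\partial u(X) \subset \bar Y$ to a pointwise assertion at well-behaved points of $u$, and then recover the full subgradient set through a standard convex-analysis limiting argument.

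First, by Lemma~\ref{lem:convex}, $u$ is convex, hence locally Lipschitz in $X$ and, by Alexandrov's theorem, twice differentiable at almost every point. Let $\Af \subset X$ denote this full-measure set of Alexandrov points, at each of which $u$ admits a second-order Taylor expansion
\[
u(z) = u(x_0) + \nabla u(x_0)\cdot(z-x_0) + \tfrac{1}{2}(z-x_0)^{T} A(x_0)(z-x_0) + o(|z-x_0|^2)
\]
as $z \to x_0$, with $A(x_0)$ the Alexandrov Hessian.

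Next, I would show $\nabla u(x_0) \in \bar Y$ for every $x_0 \in \Af$. For any $\varepsilon > 0$, the quadratic test function
\[
\phi(z) = u(x_0) + \nabla u(x_0)\cdot(z-x_0) + \tfrac{1}{2}(z-x_0)^{T}\bigl(A(x_0)+\varepsilon I\bigr)(z-x_0)
\]
satisfies $u(z) - \phi(z) = o(|z-x_0|^2) - \tfrac{\varepsilon}{2}|z-x_0|^2$, which is strictly negative for $z$ sufficiently close to but distinct from $x_0$. Hence $u - \phi$ attains a strict local maximum at $x_0$, and the viscosity subsolution property combined with the semicontinuity of the data (as recorded in the first lemma of this subsection) yields $H(\nabla\phi(x_0)) \leq 0$. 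Since $\nabla\phi(x_0) = \nabla u(x_0)$, this gives $\nabla u(x_0) \in \bar Y$.

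Finally, for arbitrary $x_0 \in X$ and $y_0 \in \partial u(x_0)$, I would invoke the standard convex-analysis fact that $\partial u(x_0)$ equals the closed convex hull of all vectors $\lim_k \nabla u(x_k)$ obtained along sequences $x_k \to x_0$ of differentiability points of $u$. Since Alexandrov points are differentiability points and are dense in $X$, such sequences can be chosen inside $\Af$; each resulting limit lies in the closed set $\bar Y$ by the previous step and continuity of $H$. Because $\bar Y$ is convex (by (H1)), its closed convex hull equals itself, so $y_0 \in \bar Y$, yielding $\partial u(X) \subset \bar Y$.

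The main obstacle is the construction in the second step: one must exhibit a $C^2$ function that touches the merely convex $u$ strictly from above at $x_0$, and this is where Alexandrov's second-order structure is indispensable—mere $C^1$ differentiability leaves an $o(|z-x_0|)$ error that cannot be dominated by any quadratic perturbation, as the elementary example $u(x)=|x|^{3/2}$ illustrates. Once the Alexandrov construction is in place, the remaining convex-analysis machinery is routine.
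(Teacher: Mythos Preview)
Your proof is correct and follows essentially the same approach as the paper's: use Alexandrov's theorem to build a quadratic test function touching $u$ from above at a point of second-order differentiability, apply the subsolution property to get $H(\nabla u)\le 0$ there, and then recover the full subdifferential via convex analysis and the convexity of $\bar Y$. The only cosmetic difference is in the final step, where the paper works directly with extreme points of $\partial u(x_0)$ (approximating each by gradients at nearby Alexandrov points) whereas you invoke the closed-convex-hull characterisation of $\partial u(x_0)$; these are two sides of the same Krein--Milman coin.
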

\begin{proof}
Consider any $x_0\in X$.  From Lemma~\ref{lem:convex}, $u$ is convex and therefore the subgradient image $\partial u(x_0)$ is a convex set.  Let $p$ be an extreme point of this convex set.  That is, if $L$ is any open line segment connecting two distinct points in $\partial u(x_0)$, $p$ does not lie in $L$.  Additionally, all points in a closed, bounded, and finite dimensional convex set can be expressed as a convex combination of the extreme points of the set.

Now choose any $\epsilon>0$.  Since $p$ is an extreme point, there exists $y \in X$ such that $u$ is differentiable at $y$ and $\abs{\nabla u(y) - p} < \epsilon$~\cite[Corollary~2.5.3]{Borwein_convex}.  Moreover, by a continuity property of the (set-valued) subgradient of a convex function, there exists $\delta >0$ such that $\abs{\partial u(z)-\nabla u(y)} < \epsilon$ whenever $\abs{z-y}<\delta$~\cite[Exercise~2.2.22]{Borwein_convex}.

By Alexandrov's Theorem~\cite{Alexandrov}, there exists some $z\in B(y,\delta)$ such that $u$ has a second-order Taylor expansion at $z$.  That is, there exists $q\in\partial u(z)$ and a symmetric positive semi-definite matrix $A$ such that
\[ u(x) = u(z) + q\cdot(z-x) + (z-x)^TA(z-x) + o(\abs{z-x}^2). \]
Moreover, we have
\[ \abs{q-p} \leq \abs{q-\nabla u(y)} + \abs{\nabla u(y) - p} < 2\epsilon. \]

Define the smooth test function
\[ \phi(x) = u(z) + q\cdot(z-x) + (z-x)^T(A+I)(z-x) \]
and note that $u-\phi$ has a local maximum at $x=z$ with $\nabla\phi(z)=q$ and $D^2\phi(z)=A+I$. Then since $u$ is a subsolution of~\eqref{eq:MAOT} we must have
\[
H(q) \leq \max\{-g(q)\det(A+I)+f(z),-\lambda_1(A+I), H(q)\} \leq 0
\]
and thus $q \subset\bar{Y}$ from Definition~\ref{def:defining}.  We then have that
\[ \text{dist}(p,\bar{Y}) < 2\epsilon. \]
Taking $\epsilon\to0$ we obtain $p \in \bar{Y}$.

Since this holds for all extreme points of the convex set $\partial u(x_0)$ and since $Y$ is itself convex, we conclude that $\partial u(x_0) \subset \bar{Y}$ for any $x_0 \in X$ and thus $\partial u(X) \subset \bar{Y}$.
\end{proof}

We now turn our attention to demonstrating that subsolutions are actually solutions of the \MA equation.  
We begin by making rigorous our intuitive idea that subgradient maps induced by subsolutions generate ``to much'' mass.

\begin{lemma}[Subsolutions and mass balance]\label{lem:subMass}
Let $u:{X}\to\R$ be an upper semi-continuous subsolution of~\eqref{eq:MAOT} and let $E$ be any uniformly convex set such that $\bar{E}\subset X$.  Then 
\bq\label{eq:subMass}
\int_{\partial u(E)}g(y)\,dy \geq \int_E f(x)\,dx.
\eq
\end{lemma}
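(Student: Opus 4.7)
The plan is to compare $u$ on $E$ against a genuine Aleksandrov solution of a Dirichlet problem with suitably regularised data, and then transfer the resulting measure identity back to $u$ via a subgradient inclusion.

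First I would set up the regularisation. Lemma~\ref{lem:convex} gives that $u$ is convex on $X$ and therefore continuous on $\bar E \subset X$. Using the upper semi-continuity of $g$ and the lower semi-continuity of $f$, I would take locally Lipschitz approximations $g_\epsilon \searrow g$ and $f_\epsilon \nearrow f$ (e.g.\ via sup- and inf-convolutions), arranged so that $g_\epsilon \geq c_\epsilon > 0$ on $\R^n$ and $f_\epsilon(x) \leq C_\epsilon \dist(x,\partial E)$ near $\partial E$ (the latter via a cutoff). These directions are chosen precisely so that $u$ remains a viscosity subsolution of the regularised \MA equation $-g_\epsilon(\nabla u)\det D^2 u + f_\epsilon = 0$ on $E$: at any strict local maximum of $u-\phi$, the subsolution property of $u$ for~\eqref{eq:MAOT} gives $g(\nabla\phi)\det D^2\phi \geq f$, and the pointwise bounds $g_\epsilon \geq g$, $f_\epsilon \leq f$ immediately transfer this to the regularised equation.

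Next I would apply Theorem~\ref{thm:aleksDirichlet} to produce a continuous Aleksandrov solution $v_\epsilon$ of $g_\epsilon(\nabla v_\epsilon)\det D^2 v_\epsilon = f_\epsilon$ on $E$ with $v_\epsilon = u$ on $\partial E$ (continuity of $u|_{\partial E}$ coming from convexity). Theorem~\ref{thm:aleksVisc} promotes $v_\epsilon$ to a viscosity solution, so Theorem~\ref{thm:comparisonMA} applies and yields $u \leq v_\epsilon$ on $\bar E$, with equality on $\partial E$. A standard lifting argument then provides the subgradient inclusion $\partial v_\epsilon(E) \subset \partial u(E)$: given $p \in \partial v_\epsilon(x_0)$ for $x_0 \in E$, the affine function $\ell(x) = v_\epsilon(x_0) + p \cdot (x-x_0)$ supports $v_\epsilon$ globally, hence $\ell \leq u$ on $\partial E$; minimising $u - \ell$ over $\bar E$ produces an interior minimiser $x_1 \in E$ (since $u - \ell \leq 0$ at $x_0$ but $\geq 0$ on $\partial E$) at which a translated affine function locally supports $u$, and the global convexity of $u$ then promotes this to $p \in \partial u(x_1)$. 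Combining this inclusion with the Aleksandrov identity for $v_\epsilon$ yields
\[
\int_{\partial u(E)} g_\epsilon(y)\,dy \;\geq\; \int_{\partial v_\epsilon(E)} g_\epsilon(y)\,dy \;=\; \int_E f_\epsilon(x)\,dx,
\]
and passing $\epsilon \to 0$ via dominated and monotone convergence (with $\partial u(E) \subset \bar Y$ bounded supplying an integrable majorant, cf.\ Lemma~\ref{lem:OTconstraint}) delivers~\eqref{eq:subMass}.

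The main obstacle is arranging the regularisation so that three seemingly competing demands are met simultaneously: the bounds $g_\epsilon \geq g$ and $f_\epsilon \leq f$ that preserve $u$ as a subsolution; the positivity, continuity, and boundary-decay hypotheses required by Theorem~\ref{thm:aleksDirichlet}; and the local-Lipschitz, bounded-below hypotheses required by Theorem~\ref{thm:comparisonMA}. A secondary subtlety is the subgradient inclusion itself, where one must use the global convexity of $u$ to extend a hyperplane that only supports $u$ locally on $E$ (which is what the comparison argument directly delivers) to a genuine element of the full subgradient $\partial u(x_1)$.
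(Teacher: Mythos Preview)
Your proposal is correct and follows essentially the same approach as the paper: regularise the data so that $u$ remains a subsolution of the modified Monge--Amp\`ere equation while the hypotheses of Theorems~\ref{thm:aleksDirichlet} and~\ref{thm:comparisonMA} are met, solve the Dirichlet problem for $v_\epsilon$, compare, use the subgradient inclusion $\partial v_\epsilon(E)\subset\partial u(E)$, and pass to the limit. The only cosmetic differences are that the paper constructs the regularisations via Baire approximation plus explicit boundary cutoffs and a $+\epsilon$ shift (tracking $L^1$ errors rather than monotone convergence), and cites \cite[Lemma~1.4.1]{Gutierrez} for the subgradient inclusion rather than giving the lifting argument you sketch.
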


\begin{proof}
From Lemma~\ref{lem:convex}, $u$ is convex and therefore continuous on $\bar{E}\subset X$.  Moreover, from Lemma~\ref{lem:OTconstraint}, $\partial u(E) \subset \partial u(X) \subset \bar{Y}$.  

Now we choose $\epsilon>0$ and mollify the data in order to take advantage of existing results on weak solutions of the \MA equation.  

We define the set
\[ E_\epsilon = \{x\in E \mid \dist(x,\partial E)>\epsilon\}. \]
By Baire's Theorem and the integrability of $f$, we can produce a locally Lipschitz continuous approximation $\tilde{f}_\epsilon$ that underestimates the original lower semi-continuous density function $f$, $0 \leq \tilde{f}_\epsilon \leq f$, with $\|f - \tilde{f}_\epsilon\|_{L^1(X)} < \epsilon$~\cite[Exercise~1.7.15(c)]{Engelking_Topology}.  We further adjust this in an $\epsilon$-neighbourhood of the boundary by defining
\[ f_\epsilon(x) = \frac{1}{\epsilon}\tilde{f}_\epsilon(x) \min\{\text{dist}(x,\partial E), \epsilon\}. \]
Notice that $f_\epsilon \leq \tilde{f}_\epsilon \leq f$ in $\bar{E}$ with $f_\epsilon = \tilde{f}_\epsilon$ in $E_\epsilon$.

Next we define the augmented target set
\[ Y_\epsilon = Y \cup \{y\in\R^n \mid \text{dist}(y,\partial Y) < \epsilon\} \]
and produce a locally Lipschitz continuous approximation $\tilde{g}_\epsilon$ that overestimates the original upper semi-continuous density function $g$, $\tilde{g}_\epsilon \geq g$ on $Y_\epsilon$, with $\tilde{g}_\epsilon = 0$ on $Y_{\epsilon}^c$ and $\|g - \tilde{g}_\epsilon\|_{L^1(\R^n)} < \epsilon$.  We further shift this so that it is strictly positive by defining
\[ g_\epsilon(y) = \tilde{g}_\epsilon(y) + \epsilon. \]

Now we consider the following Dirichlet problem for the \MA equation.
\bq\label{eq:MADirichlet}
\begin{cases}
-g_\epsilon(\nabla u_\epsilon(x))\det(D^2u_\epsilon(x)) + f_\epsilon(x) = 0, & x \in E\\
u_\epsilon(x) = u(x), & x \in \partial E\\
u_\epsilon \text{ is convex.}
\end{cases}
\eq

First we notice that $u$ itself is a viscosity subsolution of this PDE.  To check this, consider any $x_0\in E$ and convex $\phi\in C^2$ such that $u-\phi$ has a local maximum at $x_0$. Using the fact that $u$ is a subsolution of~\eqref{eq:MAOT} we can verify that
\begin{align*}
-g_\epsilon(\nabla & \phi(x_0))\det(D^2\phi(x_0)) + f_\epsilon(x_0) \leq -g(\nabla \phi(x_0))\det(D^2\phi(x_0)) + f(x_0)\\
  &\leq \max\left\{-g(\nabla \phi(x_0))\det(D^2\phi(x_0)) + f(x_0), -\lambda_1(D^2\phi(x_0)), H(\nabla\phi(x_0))\right\}\\
	&\leq 0.
\end{align*}
Thus $u$ is a subsolution.

Next, we let $v_\epsilon$ be any continuous Aleksandrov solution of~\eqref{eq:MADirichlet}; see Theorem~\ref{thm:aleksDirichlet}.  By Theorem~\ref{thm:aleksVisc}, $v_\epsilon$ is also a viscosity solution, and therefore a supersolution as well.  We can then use the classical comparison principle (Theorem~\ref{thm:comparisonMA}) to conclude that $u \leq v_\epsilon$ in $E$ with equality on the boundary $\partial E$.  Then we also have that $\partial v_\epsilon(E) \subset\partial u(E) \subset \bar{Y}$ by~\cite[Lemma 1.4.1]{Gutierrez}.

Combining these observations with the fact that $v_\epsilon$ is an Aleksandrov solution of~\eqref{eq:MADirichlet}, we can calculate
\begin{align*}
\int_{\partial u(E)} g(y)\,dy &\geq \int_{\partial v_\epsilon(E)} g(y)\,dy\\
  &\geq \int_{\partial v_{\epsilon}(E)} \tilde{g}_\epsilon(y)\,dy - \|g-\tilde{g}_\epsilon\|_{L_1(\partial v_{\epsilon}(E))}\\
	&\geq \int_{\partial v_{\epsilon}(E)} \left(g_{\epsilon}(y)-\epsilon\right)\,dy - \epsilon\\
	&= \int_E f_{\epsilon}(x)\,dx - \epsilon\left(\abs{\partial v_{\epsilon}(E)}+1\right)\\
	&\geq \int_{E_{\epsilon}} f_{\epsilon}(x)\,dx - \epsilon (\abs{Y}+1)\\
	&= \int_{E_\epsilon}\tilde{f}_\epsilon(x)\,dx - \epsilon(\abs{Y}+1)\\
	&\geq \int_{E_\epsilon} f(x)\,dx - \|\tilde{f}_\epsilon - f\|_{L_1(E_\epsilon)} - \epsilon(\abs{Y}+1)\\
	&\geq \int_{E_\epsilon} f(x)\,dx - \epsilon(\abs{Y}+2).
\end{align*}
 
Since $f\in L^1(E)$ we can take $\epsilon\to0$ to obtain
\[ \int_{\partial u(E)} g(y)\,dy \geq \int_E f(x)\,dx.  \]
\end{proof}

This result immediately generalises to general subsets of the domain through approximation by the union of convex sets as in~\cite[Proposition~1.7.1]{Gutierrez}.
\begin{corollary}\label{cor:subMass}
Let $u:{X}\to\R$ be an upper semi-continuous subsolution of~\eqref{eq:MAOT} and let $E\subset X$.  Then 
\[ \int_{\partial u(E)}g(y)\,dy \geq \int_E f(x)\,dx. \]
\end{corollary}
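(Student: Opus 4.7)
The plan is to extend the inequality of Lemma~\ref{lem:subMass} from uniformly convex subdomains $E$ with $\bar E \subset X$ to arbitrary subsets $E \subset X$ by a measure-theoretic approximation argument, mirroring Gutierrez's Proposition~1.7.1, which establishes the analogous countable additivity for the Monge-Amp\`ere measure itself.

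The first step would be to observe that, because $u$ is convex by Lemma~\ref{lem:convex}, the set function
$$\nu(E) := \int_{\partial u(E)} g(y)\,dy$$
extends to a finite Borel measure on $X$. The essential ingredient is the classical fact that the singular set
$$S(u) := \{y \in \R^n : y \in \partial u(x_1) \cap \partial u(x_2) \text{ for some } x_1 \neq x_2\}$$
has Lebesgue measure zero, and hence zero $g$-mass since $g \in L^1(\R^n)$. It follows that $\nu$ is countably additive on disjoint Borel subsets of $X$ and bounded above by $\int_{\bar Y} g\,dy < \infty$.

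Next I would transfer the inequality from uniformly convex balls $B$ with $\bar B \subset X$ (to which Lemma~\ref{lem:subMass} applies directly) to arbitrary Borel subsets of $X$. Because $\mu(E) := \int_E f(x)\,dx$ is a Radon measure on $X$, it suffices to prove $\nu(U) \geq \mu(U)$ for every open $U \subset X$: the general Borel case then follows from the monotonicity $\partial u(K) \subset \partial u(E)$ for compact $K \subset E$ combined with the inner regularity of $\mu$. For open $U$, I would cover $U$ by a countable collection of balls $\{B_j\}$ with $\bar B_j \subset X$; a Vitali-type decomposition produces a disjoint subfamily that exhausts $U$ modulo a Lebesgue-null residual, and the countable additivity of $\nu$ together with Lemma~\ref{lem:subMass} applied to each ball assembles the inequality on $U$. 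Arbitrary $E \subset X$ are then handled by passing to the Borel hull, using that $\partial u(E)$ is Lebesgue-measurable thanks to the convexity of $u$.

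The main technical obstacle is the careful treatment of the overlaps between images $\partial u(B_j)$ from different balls in the cover, as well as the Lebesgue-null residual in the Vitali decomposition: these overlaps can in principle carry positive $g$-mass. It is precisely the convexity of $u$ (Lemma~\ref{lem:convex}), via the $g$-null property of the singular set $S(u)$, that controls this potential overcounting and reduces the assembly to clean countable additivity, allowing the inequality to pass to the limit.
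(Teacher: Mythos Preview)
Your approach is essentially the one the paper invokes: the paper's proof consists of a single sentence referring the reader to Gutierrez's Proposition~1.7.1 for the approximation-by-convex-sets argument, and you are spelling out that argument (adapted to the $g$-weighted measure).

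There is, however, a small logical gap in your passage from open sets to general Borel sets. You write that once $\nu(U)\ge\mu(U)$ holds for every open $U$, the Borel case ``follows from the monotonicity $\partial u(K)\subset\partial u(E)$ for compact $K\subset E$ combined with the inner regularity of $\mu$.'' Monotonicity gives $\nu(E)\ge\nu(K)$, and inner regularity gives $\mu(E)=\sup_{K\subset E}\mu(K)$, but to chain these you still need $\nu(K)\ge\mu(K)$ for compact $K$, which you have not established---so far you have the inequality only on open sets, not on compact ones. The missing step is easy: for compact $K\subset X$ take a decreasing sequence of open sets $U_n\downarrow K$ with $\bar U_n\subset X$; since both $\nu$ and $\mu$ are finite Borel measures, continuity from above yields $\nu(K)=\lim_n\nu(U_n)\ge\lim_n\mu(U_n)=\mu(K)$. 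With that inserted, your chain balls $\to$ open $\to$ compact $\to$ Borel is complete and matches Gutierrez's argument.
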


\begin{lemma}[Viscosity subsolutions are Aleksandrov solutions]\label{lem:viscAleks}
Let $u:{X}\to\R$ be an upper semicontinuous subsolution of~\eqref{eq:MAOT}.  Then $u$ is an Aleksandrov solution of~\eqref{eq:MA}.
\end{lemma}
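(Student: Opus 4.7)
The goal is to upgrade the one-sided inequality in Corollary~\ref{cor:subMass} into the equality required by Definition~\ref{def:aleks}. The plan is to combine global mass balance with a standard null-overlap property of the subdifferential of a convex function.

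\emph{Step 1 (global mass).} I first apply Corollary~\ref{cor:subMass} with $E=X$. By Lemma~\ref{lem:OTconstraint} we have $\partial u(X)\subset\bar Y$, and by (H3) the density $g$ vanishes outside $Y$; together with the mass balance condition (H4) this yields
\[
\int_Y g(y)\,dy \;\geq\; \int_{\partial u(X)}g(y)\,dy \;\geq\; \int_X f(x)\,dx \;=\; \int_Y g(y)\,dy,
\]
so every inequality is saturated, and in particular $\int_{\partial u(X)}g\,dy=\int_X f\,dx$.

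\emph{Step 2 (localisation).} Given a measurable $E\subset X$, apply Corollary~\ref{cor:subMass} to both $E$ and $X\setminus E$:
\[
\int_{\partial u(E)}g\,dy \geq \int_E f\,dx, \qquad \int_{\partial u(X\setminus E)}g\,dy \geq \int_{X\setminus E} f\,dx.
\]
The crucial auxiliary fact I will invoke is that the overlap $\partial u(E)\cap\partial u(X\setminus E)$ has Lebesgue measure zero. Indeed, by Lemma~\ref{lem:convex} the function $u$ is convex, and any $y$ lying in both images is a subgradient of $u$ at two distinct base points, hence a point of non-differentiability of the Legendre conjugate $u^*$; such points form a Lebesgue-null set (a classical property of convex functions, underlying the construction of the Monge--Amp\`ere measure in Gutierrez). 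Since $g\in L^1(\R^n)$, the overlap contributes nothing to the $g$-integral. Using $\partial u(E)\cup\partial u(X\setminus E)=\partial u(X)$ together with Step~1, this gives
\[
\int_{\partial u(E)}g\,dy + \int_{\partial u(X\setminus E)}g\,dy \;=\; \int_{\partial u(X)}g\,dy \;=\; \int_X f(x)\,dx.
\]

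\emph{Step 3 (conclusion).} The two lower bounds from Corollary~\ref{cor:subMass} already sum to $\int_X f\,dx$, which matches the left side of the identity in Step~2. Both inequalities must therefore be equalities, and we obtain $\int_{\partial u(E)}g\,dy=\int_E f\,dx$ for every measurable $E\subset X$, which is precisely Definition~\ref{def:aleks}.

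The principal obstacle I anticipate is a clean justification of the null-overlap property at the level of generality needed (arbitrary measurable $E$ rather than just Borel sets); this is the only ingredient that requires importing external convex-analytic machinery beyond what has already been developed, and it is what ties together the essentially soft argument of Steps~1 and~3 with the PDE content packaged inside Corollary~\ref{cor:subMass}.
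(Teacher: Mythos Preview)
Your proof is correct and follows essentially the same route as the paper: apply Corollary~\ref{cor:subMass} to $E$ and to $X\setminus E$, use the mass balance condition~\eqref{eq:massbalance} together with $\partial u(X)\subset\bar Y$ (Lemma~\ref{lem:OTconstraint}), and invoke the null-overlap property of the subdifferential (\cite[Lemma~1.1.12]{Gutierrez}) to force both inequalities to equalities. The only difference is cosmetic: the paper runs the chain of inequalities directly to obtain $\int_E f\,dx\ge\int_{\partial u(E)}g\,dy$, whereas you first isolate the global equality $\int_{\partial u(X)}g=\int_X f$ and then squeeze. Your anticipated obstacle is not a real one: the null set in Gutierrez is the fixed set of subgradients shared by two distinct base points, which is independent of $E$, so the overlap $\partial u(E)\cap\partial u(X\setminus E)$ is contained in it for any $E$ whatsoever.
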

\begin{proof}
Choose any $E\subset X$. Then from the mass balance condition on the data, Corollary~\ref{cor:subMass}, and Lemma~\ref{lem:OTconstraint} we can compute
\begin{align*}
\int_E f(x)\,dx &= \int_X f(x)\,dx - \int_{X\backslash E}f(x)\,dx\\
  &\geq \int_Y g(y)\,dy - \int_{\partial u(X\backslash E)} g(y)\,dy\\
	&\geq \int_{\partial u(X)} g(y)\,dy - \int_{\partial u(X\backslash E)} g(y)\,dy.
\end{align*} 
Since the intersection of $\partial u(E)$ and $\partial u(X\backslash E)$ has zero measure~\cite[Lemma~1.1.12]{Gutierrez} and $g$ does not give mass to small sets, this simplifies to
\[ \int_E f(x)\,dx \geq \int_{\partial u(E)}g(y)\,dy. \]
Combining this with Corollary~\ref{cor:subMass}, we obtain 
\[ \int_E f(x)\,dx = \int_{\partial u(E)}g(y)\,dy \]
and thus $u$ is an Aleksandrov solution.
\end{proof}

These lemmas lead directly to our main result.
\begin{proof}[Proof of Theorem~\ref{thm:subsolutions}]
This follows immediately from Lemmas~\ref{lem:convex},~\ref{lem:OTconstraint},~\ref{lem:viscAleks} and Theorem~\ref{thm:aleksExist}.
\end{proof}
%
%

\section{Approximation of solutions}\label{sec:approx}
We are particularly interested in developing criterion that will allow for convergent numerical approximation of the second boundary value problem for the \MA equation.  The Barles-Souganidis framework suggests that consistent, monotone schemes will converge.  However, their proof requires a comparison principle, which is not satisfied by this PDE (\autoref{sec:comparison}).

In this section, we show that consistent, monotone approximations can indeed be used to approximate~\eqref{eq:MAOT} if they are additionally required (or modified) to be \emph{under-estimating}.  A surprising result of this section is that a wide range of artificial boundary conditions will lead to correct results.

A delicate issue is the existence of a solution to the approximation scheme.  Recall that the PDE itself has only one subsolution (up to additive constants).  Thus perturbing the equation could easily lead to a scheme with no subsolutions, and thus no solution.  We show that by restricting ourselves to schemes that underestimate the value of the PDE operator, we can ensure existence and stability of the methods.  

%
%
%

\subsection{Properties of schemes}\label{sec:schemes}
We begin by reviewing basic properties of approximation schemes.

Consider a set of discretisation points $\G^h\subset\bar{X}$, which can contain points in both the domain $X$ and its boundary.  We define the resolution $h$ by
\bq\label{eq:h}
h = \sup\limits_{x \in X} \inf\limits_{y \in \G^h} \abs{x-y}.
\eq
That is, every ball of radius $h$ contains a discretisation point.

We consider finite difference schemes that have the form
\bq\label{eq:approx} F^h(x,u(x),u(x)-u(\cdot)) = 0, \quad x\in \G^h \eq
where $u:\G^h\to\R$ is a grid function.  We are interested in schemes that are consistent and monotone.  

We require a one-sided notion of consistency when the scheme acts on convex functions.  Since the boundary condition used is artificial and we are only seeking convergence in the interior of the domain, we can make use of a separate definition of consistency at boundary points.
\begin{definition}[Consistency]\label{def:consistency}
The scheme~\eqref{eq:approx} is \emph{consistent} with~\eqref{eq:MAOT} on $X$
 if for any smooth convex function $\phi$ and $x\in X$,
\[ \liminf_{h\to0,y\in\G^h\to x,\xi\to0} F^h(y,\phi(y)+\xi,\phi(y)-\phi(\cdot)) \geq F(x,\phi(x),\nabla\phi(x),D^2\phi(x)). \]
The scheme is \emph{uniformly consistent} with~\eqref{eq:MAOT} if it is consistent and the above limit is achieved uniformly on $X$.
\end{definition}
Note that this is consistent with conventional definitions of consistency since we will also require schemes to underestimate the value of the PDE operator in order to prove existence and stability.

We also rely on the usual notion of monotonicity.  
\begin{definition}[Monotonicity]\label{def:monotonicity}
The scheme~\eqref{eq:approx} is \emph{monotone} if $F^h$ is a non-decreasing function of its final two arguments.
\end{definition}

We remark that boundary points need to be available in order to build schemes for second-order elliptic equations that are consistent and monotone at interior points near the boundary~\cite{Kocan}.  While the particular definition of the scheme $F^h$ at the boundary does not particularly matter (i.e. a variety of boundary conditions are possible), it needs to satisfy the monotonicity condition.  In the following, we will consider a simple homogeneous Dirichlet condition
\bq\label{eq:Dirichlet}
F^h(x,u(x),u(x)-u(\cdot)) = u(x), \quad x \in \partial X.
\eq
The following analysis can also be extended to more general Dirichlet boundary conditions and some nonlinear Neumann type boundary conditions.

Schemes are required to satisfy mild stability and continuity conditions.
\begin{definition}[Stability]\label{def:stability}
The scheme~\eqref{eq:approx} is \emph{stable} if there exists a constant $M$, independent of $h$, such that if $h>0$ and $u^h$ is any solution of~\eqref{eq:approx} then $\|u^h\|_\infty \leq M$.
\end{definition}

\begin{definition}[Continuity]\label{def:continuous}
The scheme~\eqref{eq:approx} is \emph{continuous} if $F^h$ is continuous in its last two arguments.
\end{definition}
We remark that we do not require any form of uniform continuity, and the limiting PDE operator need not be continuous.

Even though the original PDE does not have a comparison principle, the discrete approximation does satisfy a weakened form of the comparison principle.
\begin{lemma}[Discrete comparison principle~{\cite[Theorem~5]{ObermanSINUM}}]\label{lem:discreteComp}
Let $F^h$ be a monotone scheme and $F^h(x,u(x),u(x)-u(\cdot)) < F^h(x,v(x),v(x)-v(\cdot))$ for every $x\in\G^h$.  Then $u(x) \leq v(x)$ for every $x\in\G^h$.
\end{lemma}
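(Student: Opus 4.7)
The plan is a standard argmax/contradiction argument exploiting the particular algebraic form of the scheme $F^h(x,u(x),u(x)-u(\cdot))$ together with the monotonicity definition. Because the difference stencil is packaged as $u(x)-u(\cdot)$ (rather than as the raw values $u(\cdot)$), a global maximum of $u-v$ automatically makes both of the last two arguments of $F^h$ evaluated at $u$ dominate those evaluated at $v$, which is exactly what is needed to invoke monotonicity.

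First I would suppose, for contradiction, that there exists some $x\in\G^h$ with $u(x)>v(x)$. Since $\G^h$ is finite (or at least the sup is attained on a compact set; in any case the proof just needs a point where $u-v$ is maximised on $\G^h$), I would choose $x_0\in\G^h$ at which $w:=u-v$ attains its maximum, so $w(x_0)>0$ by the contradiction hypothesis, and $w(x_0)\geq w(y)$ for every $y\in\G^h$. Rearranging the latter gives
\begin{equation*}
u(x_0)-u(y)\;\geq\;v(x_0)-v(y)\quad\text{for every }y\in\G^h,
\end{equation*}
that is, the stencil argument $u(x_0)-u(\cdot)$ pointwise dominates $v(x_0)-v(\cdot)$. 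Simultaneously, $u(x_0)\geq v(x_0)$ from the choice of $x_0$.

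Next I would apply Definition~\ref{def:monotonicity}: since $F^h$ is non-decreasing in each of its last two arguments, the two inequalities just established yield
\begin{equation*}
F^h\bigl(x_0,u(x_0),u(x_0)-u(\cdot)\bigr)\;\geq\;F^h\bigl(x_0,v(x_0),v(x_0)-v(\cdot)\bigr).
\end{equation*}
This directly contradicts the hypothesis that the strict inequality $F^h(x,u(x),u(x)-u(\cdot))<F^h(x,v(x),v(x)-v(\cdot))$ holds at \emph{every} grid point, and in particular at $x_0$. Hence no such $x$ exists, and $u\leq v$ on $\G^h$.

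The argument is essentially a one-liner once the setup is in place, so there is no serious obstacle; the only subtle point is ensuring that the argmax of $u-v$ is attained (trivial if $\G^h$ is finite, and otherwise handled by noting that stability plus continuity of the grid functions forces attainment on $\bar X\cap\G^h$), and being careful that both last-argument inequalities for $F^h$ at $x_0$ point the same way, which is precisely the reason the scheme is written with $u(x)-u(\cdot)$ rather than $u(\cdot)$.
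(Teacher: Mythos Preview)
Your argument is correct and is precisely the standard proof of this result; the paper does not supply its own proof but simply cites \cite[Theorem~5]{ObermanSINUM}, whose argument is exactly the argmax-of-$u-v$ contradiction you give. There is nothing to compare.
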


\begin{remark}
Because the inequality in this discrete comparison principle is strict, it does not guarantee solution uniqueness.  For some monotone schemes, it is not possible to find $u, v$ such that $F^h[u] < F^h[v]$ at every grid point.
\end{remark}

Finally, we will use strict sub- and supersolutions of schemes to demonstrate existence and stability.
\begin{definition}[Subsolutions and supersolutions of schemes]\label{def:subsuper}
A grid function $u$ is a \emph{strict sub(super) solution} of the scheme~$F^h$ if 
\[ F^h(x,u(x),u(x)-u(\cdot)) < (>) 0 \]
for every $x\in\G^h$. 
\end{definition}

Strict supersolutions are typically straightforward to generate when approximating~\eqref{eq:MAOT}.  Strict subsolutions are more delicate; this is related to the fact that the PDE itself has many supersolutions but only a single subsolution. A variety of options are possible for ensuring the existence of a strict subsolution.  
A simple option is to require the scheme to underestimate the value of the PDE operator on an appropriate class of test functions.

\begin{definition}[Underestimating scheme]\label{def:underestimate}
Let $\G^h \subset \bar{X}$ be a set of discretisation points, $V_x^h$ the Voronoi cell containing $x\in\G^h$, and $A_x^h$ the area of this Voronoi cell.
The approximation scheme~\eqref{eq:approx} is an \emph{underestimating scheme} if, whenever $u$ is a convex piecewise linear surface with node values at $\G^h$ satisfying $\partial u(x) \cap \bar{Y} \neq \emptyset$ for every $x\in \G^h$, then
\[ F^h(x,u(x),u(x)-u(\cdot)) \leq \max\left\{\frac{-\int_{\partial u(V_x^h)}g(y)\,dy + \int_{V_x^h}f(z)\,dz}{A^h_x}, 0\right\} \]
for every $x\in\G^h\cap X$.

We say that the approximation scheme is a \emph{strictly underestimating scheme} if this inequality is strict.
\end{definition}

We note that in this definition, underestimation is required on piecewise linear functions, which concentrate all mass on the grid points $x\in\G^h$.  Thus $\partial u(x) = \partial u(V_x^h)$ at these points since each Voronoi cell $V_x^h$ contains only a single node point $x$.  The expressions in this definition are reminiscent of the methods~\cite{Aurenhammer_minkowski,olikerprussner88} for semidiscrete optimal transport, which directly evaluate $\int_{\partial u(V_x^h)}h\,dy$  for $u$ in this same class of functions.

\begin{remark}
An alternative definition for an underestimating scheme is to require 
\[ F^h(x,u(x),u(x)-u(\cdot)) \leq \max\{-g(\nabla u(x))\det(D^2u(x))+f(x), -\lambda_1(D^2u(x)), H(\nabla u(x))\} \]
for every smooth convex $u$ and $x\in\G^h\cap X$.
\end{remark}



In \autoref{sec:stability}-\ref{sec:convergence}, we will focus on proving the following two key convergence theorems. In \autoref{sec:construction} we give examples of how to construct schemes that satisfy these hypotheses.
\begin{theorem}[Existence and stability]\label{thm:stability}
Given data satisfying Hypothesis~\ref{hyp}, let $F^h$ be a uniformly consistent, monotone, continuous, strictly underestimating approximation scheme.  Then the approximation scheme~\eqref{eq:approx} has a solution $u^h$ and the scheme is stable.
\end{theorem}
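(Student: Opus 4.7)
The plan is to produce a strict discrete subsolution $u_-$ and a strict discrete supersolution $v_+$ of the scheme, both uniformly bounded in $L^\infty$ independently of $h$, and then run a Perron-type argument on the finite-dimensional system $F^h[u]=0$. For the supersolution I would take the affine function $v_+(x) = c\cdot x + M$, where $c\in\R^n$ is fixed outside $\bar Y$ (so $H(c)>0$) and $M$ is large enough to force $v_+>0$ on $\bar X$. The Dirichlet form~\eqref{eq:Dirichlet} gives $F^h[v_+]=v_+>0$ at boundary nodes, while at interior nodes $v_+$ is smooth with $\nabla v_+\equiv c$ and $D^2v_+\equiv 0$, so $F(x,v_+,c,0) = \max\{f(x),0,H(c)\}\ge H(c)>0$; uniform consistency then gives $F^h[v_+]\ge H(c)/2>0$ for $h$ sufficiently small, with $\|v_+\|_\infty$ depending only on $c$, $M$, and $\mathrm{diam}(X)$.

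The strict subsolution is the crux of the argument. I would take $u_-$ to be a semi-discrete optimal transport potential~\cite{Aurenhammer_minkowski,olikerprussner88}: the convex piecewise linear function on $\G^h\cap X$ whose subgradient cells satisfy $\partial u_-(V_x^h)\subset\bar Y$ and exactly balance mass, $\int_{\partial u_-(V_x^h)}g(y)\,dy = \int_{V_x^h}f(z)\,dz$, at every interior node. Existence of such a $u_-$ follows from the classical Aurenhammer/Oliker-Prussner theory, independently of the present framework. Because the mass-balance residual inside the $\max$ of Definition~\ref{def:underestimate} vanishes at each interior $x$, strict under-estimation immediately gives $F^h[u_-](x)<0$. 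Subtracting a sufficiently large constant $K$ enforces $u_--K<0$ on $\partial X$; by monotonicity of $F^h$ in its second argument (the third argument, being a vector of differences, is unchanged by a global shift) this only decreases $F^h$ at interior nodes, preserving strictness. Since the constraint $\partial u_-\subset\bar Y$ forces the Lipschitz constant of $u_-$ on $X$ to be bounded by $\sup_{y\in\bar Y}|y|$, a standard normalisation yields a uniform (in $h$) $L^\infty$ bound on $u_--K$.

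With strict sub- and supersolutions in hand, define
\[ u^h(x) = \sup\bigl\{\, w(x) : w \text{ is a subsolution of } F^h \text{ on } \G^h,\ w\le v_+\,\bigr\}. \]
A short monotonicity argument shows the pointwise supremum is itself a subsolution: for any $x_0$, choose a subsolution $w$ matching $u^h$ at $x_0$; since $w\le u^h$ with equality at $x_0$, the differences satisfy $u^h(x_0)-u^h(\cdot)\le w(x_0)-w(\cdot)$, and monotonicity gives $F^h[u^h](x_0)\le F^h[w](x_0)\le 0$. Maximality then forces $F^h[u^h]=0$: otherwise, perturbing $u^h$ upward by a small $\epsilon$ only at a node $x_1$ with $F^h[u^h](x_1)<0$ preserves the subsolution property (continuity at $x_1$, monotonicity at other nodes, where the differences only decrease) and yields a strictly larger subsolution. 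Stability is then immediate from $u_--K\le u^h\le v_+$ and the uniform bounds on $u_\pm$. The main obstacle I anticipate is the subsolution construction: one must verify that the semi-discrete OT potential really satisfies the hypothesis $\partial u_-(x)\cap\bar Y\ne\emptyset$ at every grid node in Definition~\ref{def:underestimate}, not just that its Voronoi-cell image lies in $\bar Y$. Should this subtlety prove problematic, I would fall back on a piecewise linear interpolant of a mollified Aleksandrov solution from Theorem~\ref{thm:aleksExist}, using the strictness gap in Definition~\ref{def:underestimate} to absorb an $O(h)$ mass discrepancy.
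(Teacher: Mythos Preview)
Your proposal is correct and follows essentially the same route as the paper: an affine supersolution with gradient outside $\bar Y$, a strict subsolution built from the semi-discrete optimal transport potential (the paper phrases this as the Aleksandrov solution of the discretised \MA equation~\eqref{eq:MAdiscrete} and then passes to its piecewise linear convex envelope), followed by a discrete Perron argument and the discrete comparison principle for stability. The subtlety you anticipate is handled exactly as you would hope: since the Aleksandrov solution $u$ of~\eqref{eq:MAdiscrete} satisfies $\partial u(\R^n)\subset\bar Y$, and the piecewise linear envelope $v$ agrees with $u$ at grid nodes so that $\partial u(x)\subset\partial v(x)$, the hypothesis $\partial v(x)\cap\bar Y\neq\emptyset$ in Definition~\ref{def:underestimate} is immediate and no fallback is needed.
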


\begin{theorem}[Convergence of schemes]\label{thm:converge}
Given data satisfying Hypothesis~\ref{hyp}, let $u^h$ be any solution of the consistent, monotone, stable approximation scheme~\eqref{eq:approx}.  Define the piecewise constant nearest-neighbours extension
\bq\label{eq:extension} U^h(x) = \sup\left\{u^h(y) \mid y\in\G^h, \, \abs{y-x} = \min\limits_{z\in\G^h}\abs{z-x}\right\}. \eq
Then for any $x\in X$,
\[ \lim\limits_{h\to0}U^h(x) = u(x) \]
where $u(x)$ is an Aleksandrov solution of~\eqref{eq:MA}, \eqref{eq:bvp2}.
\end{theorem}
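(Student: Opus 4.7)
My approach adapts the half-relaxed limits technique of Barles-Souganidis, using Theorem~\ref{thm:subsolutions} in place of the missing comparison principle. Define
\begin{equation*}
\bar u(x) = \limsup_{h\to 0,\, y\to x} U^h(y), \qquad \underline u(x) = \liminf_{h\to 0,\, y\to x} U^h(y).
\end{equation*}
Stability (Definition~\ref{def:stability}) ensures these are uniformly bounded, $\bar u$ is upper semi-continuous, $\underline u$ is lower semi-continuous, and $\underline u \leq \bar u$ on $\bar X$.

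First I would show $\bar u$ is a viscosity subsolution of~\eqref{eq:MAOT} via the standard monotone-scheme argument. Given $\phi \in C^2$ such that $\bar u - \phi$ attains a strict local maximum at $x_0 \in X$, I would select grid points $x_h \in \G^h$ with $x_h \to x_0$ and $u^h(x_h) \to \bar u(x_0)$ at which $u^h - \phi$ attains a local maximum over $\G^h$. Writing $\xi_h = u^h(x_h) - \phi(x_h)$, monotonicity (Definition~\ref{def:monotonicity}) converts $F^h(x_h, u^h(x_h), u^h(x_h)-u^h(\cdot)) = 0$ into
\begin{equation*}
F^h(x_h, \phi(x_h) + \xi_h, \phi(x_h) - \phi(\cdot)) \leq 0.
\end{equation*}
Consistency (Definition~\ref{def:consistency}) then passes to the limit to give $F(x_0, \bar u(x_0), \nabla \phi(x_0), D^2 \phi(x_0)) \leq 0$. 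Theorem~\ref{thm:subsolutions} now identifies $\bar u$ as a convex Aleksandrov solution of~\eqref{eq:MA}--\eqref{eq:bvp2}, unique on $\text{supp}(f)$ up to an additive constant.

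The remaining task is to upgrade this to pointwise convergence $\lim_{h\to 0} U^h(x) = \bar u(x)$ on $X$, i.e., to establish $\underline u = \bar u$ there. I would use the convexity of $\bar u$ from Lemma~\ref{lem:convex} to obtain local Lipschitz regularity, and combine this with a precompactness argument for the family $\{U^h\}$ based on a discrete convexity of the solutions that follows from consistency with the $-\lambda_1$ component of~\eqref{eq:MAOT} together with monotonicity. An Arzel\`a--Ascoli extraction then shows every subsequence of $\{U^h\}$ has a sub-subsequence converging locally uniformly on $X$ to a convex continuous function which, by the previous step, is itself an Aleksandrov solution of~\eqref{eq:MA}--\eqref{eq:bvp2}.

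The main obstacle is showing that all such subsequential limits coincide as functions, not merely up to an additive constant. Theorem~\ref{thm:subsolutions} pins down subsolutions of~\eqref{eq:MAOT} only modulo constants, so a priori distinct subsequences could converge to Aleksandrov solutions offset by a nontrivial constant, which would force $U^h$ to fail pointwise convergence. Resolving this requires anchoring the additive constant to the fixed sequence $\{u^h\}$, typically via the artificial boundary condition (e.g.~\eqref{eq:Dirichlet}), and verifying that this normalisation survives the half-relaxed limiting procedure despite the boundary-layer mismatch between the artificial boundary data and the true free-boundary behaviour at $\partial X$. Once uniqueness of the subsequential limit is secured, $\underline u = \bar u$ at each interior point and the claimed pointwise convergence follows.
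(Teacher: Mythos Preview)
Your treatment of the upper half-relaxed limit $\bar u$ matches the paper (this is Lemma~\ref{lem:uscApprox}), and invoking Theorem~\ref{thm:subsolutions} to identify $\bar u$ as an Aleksandrov solution is exactly right.

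The gap is in your lower bound. You propose extracting compactness from a ``discrete convexity of the solutions that follows from consistency with the $-\lambda_1$ component,'' but consistency is purely asymptotic and does not force $u^h$ to be convex (or discretely convex) for fixed $h$; nothing in Definitions~\ref{def:consistency}--\ref{def:monotonicity} yields an equicontinuity bound on $\{U^h\}$. Even granting compactness, you correctly flag the additive-constant problem but leave it unresolved: the boundary condition~\eqref{eq:Dirichlet} is artificial and there is no reason the half-relaxed limits should respect it at $\partial X$, so ``anchoring via the boundary'' is not an argument.

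The paper closes this gap by a different mechanism that you have not used: the \emph{underestimating} property. It builds an explicit lower barrier $v^h$ as the Aleksandrov solution of the semi-discrete problem~\eqref{eq:MAdiscrete2} (with discrete source $\mu^h$ and normalisation $\sup_X v^h = -h$). By the proof of Lemma~\ref{lem:existSubUnder}, $v^h$ is a \emph{strict} subsolution of the scheme, so the discrete comparison principle (Lemma~\ref{lem:discreteComp}) gives $v^h \le u^h$ on $\G^h$. Independently, $\mu^h \rightharpoonup f\,dx$ and the family $\{v^h\}$ is uniformly Lipschitz (subgradients lie in $\bar Y$), so stability of optimal transport forces $v^h \to \bar u$ uniformly. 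Combining these, $\underline u(x) \ge \lim_{h\to 0} v^h(x) = \bar u(x)$, which both supplies the missing lower bound and fixes the additive constant in one stroke. Your proposal does not invoke the underestimating hypothesis at all, and without it the discrete comparison with a known converging family is unavailable.
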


\subsection{Existence and stability}\label{sec:stability}
First we seek to prove that approximation schemes have solutions that can be bounded in $L^\infty$.  The key to existence is to use a discrete version of Perron's method.  Solutions are then bounded via the discrete comparison principle (Lemma~\ref{lem:discreteComp}).
Both arguments will require the existence of strict sub- and supersolutions. 

\begin{lemma}[Existence of stable subsolution]\label{lem:existSubUnder}
Let $F^h$ be a strictly underestimating approximation scheme for~\eqref{eq:MAOT} with data satisfying Hypothesis~\ref{hyp}.  Then there exists a bounded function $u:\bar{X}\to\R$, independent of $h$, such that $u$ is a strict subsolution of the scheme~\eqref{eq:approx}.
\end{lemma}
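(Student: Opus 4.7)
The strategy is to build the strict subsolution from the Aleksandrov solution of the second boundary value problem and to exploit its exact mass balance. By Theorem~\ref{thm:aleksExist}, an Aleksandrov solution $u^*$ exists; since $\partial u^*(X)\subseteq\bar Y$ is bounded, $u^*$ is Lipschitz and hence bounded on $\bar X$, and it satisfies $\int_E f = \int_{\partial u^*(E)}g$ for every measurable $E\subseteq X$. I will set $u = u^* - C$ with $C > \sup_{\bar X}u^*$, so that $u<0$ on $\bar X$; this function is bounded, convex, and independent of $h$.

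At each boundary grid point, the Dirichlet extension~\eqref{eq:Dirichlet} gives $F^h(x,u(x),u(x)-u(\cdot)) = u(x) < 0$ immediately. For interior grid points the plan is to invoke the strictly underestimating property. Since $u$ is convex, its grid values $u|_{\G^h}$ are the node values of a convex piecewise linear surface $\tilde u^h$---for instance the lower convex envelope of $\{(x_i,u(x_i))\}_{x_i\in\G^h}$---with $\tilde u^h(x_i) = u(x_i)$. A routine argument shows that every supporting affine of $u^*$ at a grid point $x_i$ also supports $\tilde u^h$ at $x_i$, so $\partial\tilde u^h(x_i)\supseteq\partial u^*(x_i)\subseteq\bar Y$, which verifies $\partial\tilde u^h(x_i)\cap\bar Y\neq\emptyset$. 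Provided that the Voronoi-cell mass inequality $\int_{\partial\tilde u^h(V_x^h)}g \geq \int_{V_x^h}f$ holds, the strictly underestimating property then yields
\[
F^h(x,u(x),u(x)-u(\cdot)) < \max\Bigl\{\frac{-\int_{\partial\tilde u^h(V_x^h)}g + \int_{V_x^h}f}{A_x^h}, 0\Bigr\} = 0.
\]

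The crux of the argument---and the step I expect to be the main obstacle---is this cell-wise mass inequality. The Aleksandrov solution $u^*$ satisfies $\int_{\partial u^*(V_x^h)}g = \int_{V_x^h}f$ exactly, and one can show globally that $\partial u^*(\bar X)\subseteq\partial\tilde u^h(\bar X)$, but the lower envelope $\tilde u^h$ effectively rearranges the target into Laguerre cells indexed by the weights $u^*(x_i)$ that need not match the Aleksandrov images $\partial u^*(V_x^h)$; elementary examples in one dimension with nonuniform $f$ show that some Voronoi cells can then receive slightly less $g$-mass than $f$-mass, so global balance alone is insufficient. Making the plan rigorous will therefore require either a more careful piecewise linear construction---most naturally one tied to the semidiscrete optimal transport problem, which matches Voronoi masses exactly, with a uniform $L^\infty$ bound extracted from the stability of optimal transportation---or a perturbation of $u^*$ producing a strict mass surplus in every cell. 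Once either route is made precise, the rest of the argument proceeds exactly as above.
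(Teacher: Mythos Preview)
Your diagnosis is exactly right: the continuous Aleksandrov solution $u^*$ does not give the cell-wise mass inequality, and the remedy you point to---the semidiscrete optimal transport problem---is precisely what the paper uses. So you have found the correct argument; you simply stopped short of carrying it out.

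Here is how the paper closes the gap. Replace $f$ by the discrete measure $\mu = \sum_{x_i\in\G^h}\mu_i\,\delta_{x_i}$ with $\mu_i = \int_{V_{x_i}^h} f$, and let $u$ be the Aleksandrov solution of $\int_{\partial u(E)}g = \mu(E)$, $\partial u(\R^n)\subset\bar Y$, normalised so that $u<0$ on $\bar X$. Now take $v$ to be the largest convex function agreeing with $u$ on $\G^h$; this is the piecewise linear surface to which the underestimating definition applies, and since the scheme sees only grid values, $F^h[u]=F^h[v]$. Because $v\ge u$ with equality at each node $x_i$, every supporting hyperplane of $u$ at $x_i$ also supports $v$ there, so $\partial u(x_i)\subset\partial v(x_i)$; in particular $\partial v(x_i)\cap\bar Y\neq\emptyset$. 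The crucial point is that the source measure $\mu$ is concentrated on $\G^h$, so $\int_{\partial u(V_{x_i}^h\setminus\{x_i\})}g = \mu(V_{x_i}^h\setminus\{x_i\}) = 0$ and hence $\int_{\partial u(x_i)}g = \mu_i$. For the piecewise linear $v$, the subgradient image $\partial v(V_{x_i}^h)$ equals $\partial v(x_i)$ up to a null set. Combining,
\[
\int_{\partial v(V_{x_i}^h)}g \;=\; \int_{\partial v(x_i)}g \;\ge\; \int_{\partial u(x_i)}g \;=\; \mu_i \;=\; \int_{V_{x_i}^h} f,
\]
which is exactly the cell-wise inequality you needed. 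The strictly underestimating property then gives $F^h[v]<0$ at interior nodes, and $v<0$ handles the boundary.

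One remark on the phrase ``independent of $h$'' in the statement: the construction above is manifestly $h$-dependent, since $\mu$ and hence $u$ depend on $\G^h$. What is actually needed downstream (for the stability Lemma~\ref{lem:stability} and the convergence Theorem~\ref{thm:converge}) is only a \emph{uniform} $L^\infty$ bound on the family of subsolutions, and that follows from the stability of optimal transport: the solutions of the semidiscrete problems are uniformly Lipschitz with gradients in $\bar Y$, and the normalisation pins down the additive constant. So read ``independent of $h$'' as ``uniformly bounded in $h$''; the paper's own proof does not produce a single function that works for all $h$.
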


\begin{proof}
We begin by approximating the density function $f$ by a discrete measure,
\[ \mu(x) = \sum\limits_{x_i\in\G^h}\mu_i\delta_{x_i}(x), \quad \mu_i = \int_{V_{x_i}^h}f(z)\,dz. \]
Now we let $u$ be a convex Aleksandrov solution of the \MA equation
\bq\label{eq:MAdiscrete}
\begin{cases}
\int_{\partial u(E)}g(p)\,dp = \mu(E)\\
\partial u(\R^n) \subset \bar{Y}.
\end{cases}
\eq
Since $u$ is unique up to additive constants and bounded on compact sets, we can assume without loss of generality that $M < u(x) < 0$ for some $M\in\R$ and every $x\in\bar{X}$.

Next we define $v(x)$ to be the largest convex function such that $v(x)=u(x)$ for every $x\in\G^h$.  Notice that $v$ is piecewise linear with nodes at the points $x\in\G^h$.  Moreover, $v \geq u$ by definition since $u$ is itself a convex function, with $M < v(x) < 0$ for $x\in\bar{X}$.  Because of the equality at the node points, we also have $\partial u(x) \subset \partial v(x)$ for every $x\in\G^h$.  In particular, since $\partial u(\R^n) \subset \bar{Y}$, we can conclude that $\partial v(x) \cap \bar{Y} \neq \emptyset$ for every $x\in\G^h$.

Now we show that $v$ (restricted to node points) is a strict subsolution of $F^h$.

Choose any $x\in\G^h$ and $p\in\partial u(x)\subset\partial v(x)$.  If $x\in\partial X$ then
\[ F^h(x,v(x),v(x)-v(\cdot)) = v(x) < 0 \]
and the condition for a subsolution is trivially satisfied.

Otherwise, we have $x\in\G^h\cap X$.

{\bf Case 1}: \[\max\left\{\dfrac{-\int_{\partial v(V_x^h)}g(y)\,dy + \int_{V_x^h}f(z)\,dz}{A^h_x}, 0\right\} = 0.\]
In that case, since the scheme underestimates it will trivially satisfy
\[ F^h(x,v(x),v(x)-v(\cdot)) < 0. \]


{\bf Case 2}: \[\max\left\{\dfrac{-\int_{\partial v(V_x^h)}g(y)\,dy + \int_{V_x^h}f(z)\,dz}{A^h_x}, 0\right\} = \dfrac{-\int_{\partial v(V_x^h)}g(y)\,dy + \int_{V_x^h}f(z)\,dz}{A^h_x}.\]
Since the scheme underestimates, we have
\[
A_{x}^hF^h(x,v(x),v(x)-v(\cdot)) < -\int_{\partial v(V_x^h)}g(y)\,dy + \int_{V_x^h}f(z)\,dz.
\]
Since $\partial u(x) \subset \partial v(x)$ and since both measures concentrate all mass at the node points, we obtain
\begin{align*}
A_{x}^hF^h(x,v(x),v(x)-v(\cdot)) &< -\int_{\partial u(V_x^h)}g(y)\,dy + \int_{V_x^h}f(z)\,dz\\
  &= -\mu(V_x^h) + \int_{V_x^h}f(z)\,dz\\
	&= -\int_{V_x^h}f(z)\,dz+\int_{V_x^h}f(z)\,dz\\
	&=0.
\end{align*}

We conclude that $v$ is a stable strict subsolution of $F^h$.
\end{proof}

\begin{lemma}[Existence of stable supersolution]\label{lem:super}
Let $F^h$ be a uniformly consistent approximation scheme for~\eqref{eq:MAOT} with data satisfying Hypothesis~\ref{hyp}.  Then there exists a bounded function $u:\bar{X}\to\R$, independent of $h$, such that $u$ is a strict supersolution of the scheme~\eqref{eq:approx} for sufficiently small $h>0$.
\end{lemma}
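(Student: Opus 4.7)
The strategy is to build an affine function whose gradient points outside the target set $\bar{Y}$, making the Hamilton-Jacobi branch of the PDE strictly positive, and then to transfer this strict positivity to the scheme via uniform consistency. The Dirichlet boundary condition can be handled by a harmless additive constant.

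Concretely, since $Y$ is bounded, I would pick any $p \in \R^n \setminus \bar{Y}$. By Definition~\ref{def:defining}, $\delta := H(p) > 0$. Then, since $\bar{X}$ is bounded, choose a constant $c$ large enough that
\[ u(x) := p \cdot x + c > 0 \quad \text{for all } x \in \bar{X}. \]
This $u$ is bounded and independent of $h$, and I claim it is a strict supersolution of $F^h$ once $h$ is small enough.

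To verify this, split into interior and boundary grid points. For $x \in \G^h \cap X$, the function $u$ is smooth and (trivially) convex, with $\nabla u \equiv p$ and $D^2 u \equiv 0$. Evaluating the limit operator gives
\[ F(x, u(x), p, 0) = \max\{f(x), \; 0, \; H(p)\} \geq \delta, \]
using that $\det(0) = \lambda_1(0) = 0$ and $f \geq 0$. Applying the uniform consistency hypothesis (Definition~\ref{def:consistency}) to the affine test function $u$ with $\xi = 0$ and $y = x$, there is some $h_0 > 0$ such that for every $h < h_0$ and every $x \in \G^h \cap X$,
\[ F^h(x, u(x), u(x) - u(\cdot)) \geq F(x, u(x), p, 0) - \delta/2 \geq \delta/2 > 0. \]
For boundary nodes $x \in \G^h \cap \partial X$, the imposed homogeneous Dirichlet scheme~\eqref{eq:Dirichlet} gives
\[ F^h(x, u(x), u(x) - u(\cdot)) = u(x) > 0 \]
by our choice of $c$. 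Combining the two cases, $u$ is a strict supersolution of the scheme for all sufficiently small $h$.

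I do not expect significant obstacles here; the argument is essentially a one-line application of uniform consistency once the right affine test function is identified. The only subtleties to double-check are that (i) affine functions qualify as smooth convex test functions in Definition~\ref{def:consistency}, which they do, and (ii) the uniformity of the consistency estimate is what allows a single threshold $h_0$ to suffice for every interior grid point at once. The non-triviality of this lemma compared to the subsolution case (Lemma~\ref{lem:existSubUnder}) is mild precisely because the PDE admits many supersolutions in the continuum, so any convenient one with a strict inequality works.
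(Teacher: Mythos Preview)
Your proof is correct and follows essentially the same approach as the paper's: an affine function with gradient $p\notin\bar{Y}$ makes the Hamilton--Jacobi term strictly positive, uniform consistency transfers this to $F^h$ at interior points, and an additive constant handles the Dirichlet boundary. The only cosmetic differences are that the paper writes the constant explicitly as $-\inf_{y\in\partial X}\{p\cdot y\}+\epsilon$ and bounds $F$ below by $H(\nabla u)$ directly rather than computing all three branches of the maximum.
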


\begin{proof}
Choose any $p\notin \bar{Y}$ so that $H(p)>\epsilon$ for some $\epsilon>0$.
Define the function
\[ u(x) = p\cdot x -\inf\limits_{y\in\partial X}\{p\cdot y\}  + \epsilon. \]

For $x\in X$, the PDE operator~\eqref{eq:MAOT} satisfies
\[ F(x,u(x),\nabla u(x), D^2u(x)) \geq H(\nabla u(x)) > \epsilon. \]
Since $F^h$ is unifromly consistent we have for $x\in \G^h\cap X$ and sufficiently small $h>0$ (independent of $x$),
\begin{align*} F^h(x,u(x),u(x)-u(\cdot)) 
&\geq \liminf_{h\to0,y\in\G^h\to x,\xi\to0} F^h(y,u(y)+\xi,u(y)-u(\cdot)) - \frac{\epsilon}{2} \\
&\geq F(x,u(x),\nabla u(x),D^2u(x)) - \frac{\epsilon}{2} \\
&>\frac{\epsilon}{2}.
\end{align*}

For $x\in \partial X$, the boundary operator~\eqref{eq:Dirichlet} satisfies
\[ F^h(x,u(x),u(x)-u(\cdot)) = u(x) \geq \epsilon.\]

We conclude that $u$ is a strict supersolution.
\end{proof}

%
%
%

\begin{lemma}[Existence of solution]\label{lem:exist}
Under the assumptions of Hypothesis~\ref{hyp}, let $F^h$ be a consistent, monotone, continuous, strictly underestimating approximation scheme.  Then there exists a solution $u^h$ to the scheme~\eqref{eq:approx}.
\end{lemma}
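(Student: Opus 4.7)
The plan is to carry out a discrete analogue of Perron's method, using the strict subsolution $\underline{u}$ guaranteed by Lemma~\ref{lem:existSubUnder} and the strict supersolution $\bar{u}$ guaranteed by Lemma~\ref{lem:super}. Because $\G^h$ is finite, grid functions live in a finite-dimensional space, so the set-theoretic Perron argument becomes considerably cleaner than in the PDE setting, and we do not need any comparison principle stronger than Lemma~\ref{lem:discreteComp}.

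First I would verify that $\underline{u} \le \bar{u}$ on $\G^h$. Since $F^h[\underline{u}] < 0 < F^h[\bar{u}]$ at every grid point, the discrete comparison principle (Lemma~\ref{lem:discreteComp}) applies directly. Define
\[
\mathcal{S} = \{ v : \G^h \to \R \mid v \text{ is a subsolution of } F^h \text{ and } v \le \bar{u} \text{ on } \G^h \},
\]
which is nonempty (it contains $\underline{u}$) and pointwise bounded between $\underline{u}$ and $\bar{u}$. Set
\[
u^h(x) = \sup_{v \in \mathcal{S}} v(x), \qquad x \in \G^h.
\]

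The next step is to show $u^h \in \mathcal{S}$. The key observation is that $\mathcal{S}$ is closed under finite pointwise maxima: if $v_1,v_2 \in \mathcal{S}$ and $w = \max(v_1,v_2)$, then at any $x$ where $w(x) = v_i(x)$, one has $w(x) - w(y) \le v_i(x) - v_i(y)$ for all $y$, so by monotonicity (Definition~\ref{def:monotonicity}) $F^h(x,w(x),w(x)-w(\cdot)) \le F^h(x,v_i(x),v_i(x)-v_i(\cdot)) \le 0$. Enumerating $\G^h = \{x_1,\dots,x_N\}$ and choosing $v_{n,k} \in \mathcal{S}$ with $v_{n,k}(x_k) \to u^h(x_k)$, the finite maxima $w_n = \max_k v_{n,k}$ lie in $\mathcal{S}$ and increase pointwise to $u^h$. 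Continuity of $F^h$ in its last two arguments (Definition~\ref{def:continuous}) then gives $F^h(x,u^h(x),u^h(x)-u^h(\cdot)) \le 0$ for every $x$, so $u^h \in \mathcal{S}$.

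It remains to show $u^h$ is a supersolution, which will finish the proof. Suppose for contradiction that at some $x_0 \in \G^h$ one has $F^h(x_0,u^h(x_0),u^h(x_0)-u^h(\cdot)) < 0$. I claim $u^h(x_0) < \bar{u}(x_0)$: otherwise $u^h(x_0) = \bar{u}(x_0)$, and since $u^h \le \bar{u}$ implies $\bar{u}(x_0) - u^h(\cdot) \ge \bar{u}(x_0) - \bar{u}(\cdot)$, monotonicity gives
\[
F^h(x_0,u^h(x_0),u^h(x_0)-u^h(\cdot)) \ge F^h(x_0,\bar{u}(x_0),\bar{u}(x_0)-\bar{u}(\cdot)) > 0,
\]
a contradiction. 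Now define $\tilde{u}(x) = u^h(x) + \delta\,\one_{\{x=x_0\}}$ for small $\delta > 0$. At any $x \ne x_0$, only the entry of the third argument corresponding to $x_0$ changes, decreasing from $u^h(x)-u^h(x_0)$ to $u^h(x)-u^h(x_0)-\delta$; monotonicity then gives $F^h(x,\tilde{u}(x),\tilde{u}(x)-\tilde{u}(\cdot)) \le F^h(x,u^h(x),u^h(x)-u^h(\cdot)) \le 0$. At $x=x_0$, both the second argument and (some entries of) the third argument increase, so monotonicity alone is insufficient: this is the delicate step. However, by continuity of $F^h$ in its last two arguments, we can choose $\delta > 0$ small enough that $F^h(x_0,\tilde{u}(x_0),\tilde{u}(x_0)-\tilde{u}(\cdot)) < 0$ and simultaneously $\tilde{u}(x_0) \le \bar{u}(x_0)$ (using the strict inequality $u^h(x_0) < \bar{u}(x_0)$). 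Thus $\tilde{u} \in \mathcal{S}$ yet $\tilde{u}(x_0) > u^h(x_0)$, contradicting the definition of $u^h$ as the pointwise supremum.

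The main obstacle is the last perturbation step: at $x_0$ itself, monotonicity pushes $F^h$ in the wrong direction, so we cannot avoid invoking continuity of the scheme in its last two arguments. Everything else is a standard Perron bookkeeping argument once the strict sub/supersolutions and the discrete comparison principle are in place.
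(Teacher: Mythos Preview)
Your argument is correct and is precisely the discrete Perron method that the paper invokes; the paper's own proof simply cites the strict sub- and supersolutions from Lemmas~\ref{lem:existSubUnder}--\ref{lem:super} and then defers the Perron construction to \cite[Lemma~5.8]{Hamfeldt_Gauss}, whereas you have written out that construction in full. The steps (closure under maxima via monotonicity, passage to the supremum via continuity, and the bump argument using the strict supersolution to force equality) are exactly the ones in the cited reference, so there is no substantive difference in approach.
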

\begin{proof}
By Lemmas~\ref{lem:existSubUnder}-\ref{lem:super} there exist a strict subsolution $v^h$ and a strict supersolution $w^h$.  Using a discrete version of Perron's method, identical to the proof of~\cite[Lemma~5.8]{Hamfeldt_Gauss}, we can construct a solution to the scheme.
\end{proof}

\begin{lemma}[Stability of solutions]\label{lem:stability}
Under the assumptions of Hypothesis~\ref{hyp}, let $F^h$ be a consistent, monotone, continuous, strictly underestimating approximation scheme.  Then $F^h$ is stable for sufficiently small $h>0$.
\end{lemma}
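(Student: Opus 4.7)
The plan is to combine the existence of a stable strict subsolution (Lemma~\ref{lem:existSubUnder}) and a stable strict supersolution (Lemma~\ref{lem:super}) with the discrete comparison principle (Lemma~\ref{lem:discreteComp}) to sandwich any solution $u^h$ between two functions whose $L^\infty$ bounds do not depend on $h$.

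Concretely, I would first invoke Lemma~\ref{lem:existSubUnder} to obtain a function $v:\bar{X}\to\R$, independent of $h$, with $M < v(x) < 0$ on $\bar X$ and with $F^h(x,v(x),v(x)-v(\cdot)) < 0$ for every $x\in\G^h$. Next, I would invoke Lemma~\ref{lem:super} to obtain, for all sufficiently small $h>0$, a function $w:\bar X\to\R$ (again independent of $h$) satisfying $F^h(x,w(x),w(x)-w(\cdot)) > 0$ for every $x\in\G^h$, and bounded by some constant depending only on the fixed $p\notin\bar Y$, $\epsilon$ and $\mathrm{diam}(X)$.

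Now let $u^h$ be any solution of the scheme; such a solution exists by Lemma~\ref{lem:exist}, and by definition $F^h(x,u^h(x),u^h(x)-u^h(\cdot))=0$ at every grid point. Then at every $x\in\G^h$ we have
\[
F^h(x,v(x),v(x)-v(\cdot)) < 0 = F^h(x,u^h(x),u^h(x)-u^h(\cdot)) < F^h(x,w(x),w(x)-w(\cdot)).
\]
Applying Lemma~\ref{lem:discreteComp} to each pair yields $v(x) \leq u^h(x) \leq w(x)$ for every $x\in\G^h$. Setting
\[
M := \max\{\|v\|_{L^\infty(\bar X)},\,\|w\|_{L^\infty(\bar X)}\},
\]
which is finite and independent of $h$, gives $\|u^h\|_\infty\leq M$, i.e.\ stability in the sense of Definition~\ref{def:stability}.

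There is essentially no obstacle here: the entire content of the lemma has been pushed into the construction of the bounded strict sub- and supersolutions, which was the genuinely delicate step (especially the subsolution, where the strictly-underestimating hypothesis and the semi-discrete Aleksandrov construction were required). The only detail worth double-checking is that the supersolution $w$ from Lemma~\ref{lem:super} is independent of $h$ even though it is only guaranteed to be a strict supersolution once $h$ is small enough; since the $M$ above depends only on $\|v\|_\infty$ and $\|w\|_\infty$ and not on $h$, the $h$-threshold from Lemma~\ref{lem:super} is harmless and the stability bound is uniform in $h$ for $h$ below that threshold.
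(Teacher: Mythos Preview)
Your proposal is correct and follows essentially the same argument as the paper: invoke the bounded strict subsolution from Lemma~\ref{lem:existSubUnder} and the bounded strict supersolution from Lemma~\ref{lem:super}, then sandwich any solution $u^h$ between them via the discrete comparison principle (Lemma~\ref{lem:discreteComp}) to obtain a uniform $L^\infty$ bound. The only cosmetic difference is that the paper writes the barriers as $v^h$, $w^h$ (their construction in the proofs of those lemmas actually depends on $\G^h$, though the bounds do not), whereas you take the lemma statements at face value and treat them as $h$-independent; either reading gives the same conclusion.
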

\begin{proof}
Let $u^h$ be any solution of~\eqref{eq:approx}.  For small enough $h>0$, Lemmas~\ref{lem:existSubUnder}-\ref{lem:super} ensure the existence of bounded, strict sub- and supersolutions $v^h$ and $w^h$.  Note also that for any $x\in\G^h$,
\begin{align*} F^h(x,&v^h(x),v^h(x)-v^h(\cdot)) < 0 \\ &= F^h(x,u^h(x),u^h(x)-u^h(\cdot))\\ &< F^h(x,w^h(x),w^h(x)-w^h(\cdot)). \end{align*}
By the discrete comparison principle (Lemma~\ref{lem:discreteComp}), we have
\[ -M \leq v^h(x) \leq u^h(x) \leq w^h(x) \leq M \]
for some $M\in\R$ independent of $h$.

Thus $u^h$ is bounded independent of $h$ and the scheme is stable.
\end{proof}

These lemmas immediately yield the proof of Theorem~\ref{thm:stability}.

\subsection{Convergence of schemes}\label{sec:convergence}
Next we prove the main converge result (Theorem~\ref{thm:converge}).  The idea will be to use the usual Barles-Souganidis approach to demonstrate that the upper semi-continuous envelope of the approximate solutions is a subsolution of the PDE.  We will use the under-estimating property of schemes to control the lower semi-continuous envelope and complete the convergence proof.

\begin{lemma}[Upper semi-continuous envelope of approximations]\label{lem:uscApprox}
Under the assumptions of Hypothesis~\ref{hyp}, let $F^h$ be a uniformly consistent, monotone, continuous, strictly underestimating approximation scheme. Let $u^h$ be a solution of the scheme~\eqref{eq:approx} and $U^h$ a piecewise constant extension onto $\bar{X}$.  Define
\bq\label{eq:usc}
\bar{u}(x) = \limsup\limits_{h\to0, y\to x}U^h(y).
\eq
Then $\bar{u}(x)$ is a subsolution of~\eqref{eq:MAOT} on $X$.
\end{lemma}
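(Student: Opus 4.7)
The plan is to carry out the standard Barles--Souganidis stability argument for monotone, consistent schemes, taking care that the consistency condition in Definition~\ref{def:consistency} is one-sided and restricted to convex test functions. Fix $\phi\in C^2$, assumed for the moment to be convex, and suppose $\bar u-\phi$ attains a strict local maximum at some $x_0\in X$. After subtracting a constant from $\phi$, we may take $\bar u(x_0)=\phi(x_0)$, and we choose $r>0$ small enough that $\overline{B(x_0,r)}\subset X$ and $(\bar u-\phi)(x)<0$ on $\overline{B(x_0,r)}\setminus\{x_0\}$.

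First I would use the definition~\eqref{eq:usc} together with the nearest-neighbour structure of $U^h$ to extract sequences $h_n\to 0$ and grid points $y_n\in\G^{h_n}$ with $|y_n-x_0|\to 0$ and $u^{h_n}(y_n)\to\bar u(x_0)$. Letting $\tilde y_n$ be a maximizer of $u^{h_n}-\phi$ over the finite set $\overline{B(x_0,r)}\cap\G^{h_n}$, the estimate $u^{h_n}(\tilde y_n)-\phi(\tilde y_n)\geq u^{h_n}(y_n)-\phi(y_n)\to 0$ combined with the strict maximum property forces $\tilde y_n\to x_0$, $u^{h_n}(\tilde y_n)\to\bar u(x_0)$, and $\tilde y_n\in\G^{h_n}\cap X$ for all sufficiently large $n$.

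Next I would invoke monotonicity of the scheme. Since the scheme's stencil at $\tilde y_n$ is eventually contained in $B(x_0,r)$ (or, alternatively, $\phi$ can be modified outside $B(x_0,r)$ to a function $\tilde\phi$ that coincides with $\phi$ near $x_0$ and dominates $u^{h_n}$ elsewhere in view of the stability bound of Theorem~\ref{thm:stability}), we have $u^{h_n}(\tilde y_n)-u^{h_n}(y)\geq\phi(\tilde y_n)-\phi(y)$ for every relevant grid point $y$. Monotonicity of $F^{h_n}$ in its third argument then yields
\[
0=F^{h_n}(\tilde y_n, u^{h_n}(\tilde y_n), u^{h_n}(\tilde y_n)-u^{h_n}(\cdot))\geq F^{h_n}(\tilde y_n, \phi(\tilde y_n)+\xi_n, \phi(\tilde y_n)-\phi(\cdot)),
\]
where $\xi_n := u^{h_n}(\tilde y_n)-\phi(\tilde y_n)\to 0$. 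Taking $\liminf_{n\to\infty}$ and applying the one-sided consistency of Definition~\ref{def:consistency} (applicable since $\phi$ is convex in a neighbourhood of $x_0$) gives
\[
0\geq F(x_0,\phi(x_0),\nabla\phi(x_0),D^2\phi(x_0)).
\]
Because $f$ is lower semicontinuous, $g$ is upper semicontinuous, and $\det$, $\lambda_1$, and $H$ are continuous, the operator in~\eqref{eq:MAOT} coincides with its lower semicontinuous envelope $F_*$ at points where $D^2\phi(x_0)$ is positive semidefinite, which is precisely the viscosity subsolution condition.

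The main obstacle is handling test functions whose Hessian at $x_0$ fails to be positive semidefinite, since Definition~\ref{def:consistency} is formulated only for convex $\phi$. In such cases the subsolution inequality would require $-\lambda_1(D^2\phi(x_0))\leq 0$, which already fails, so we must independently rule out the occurrence of a local maximum of $\bar u-\phi$ at $x_0$ for non-convex $\phi$. The resolution is to first establish that $\bar u$ is itself convex, either by adapting the argument of Lemma~\ref{lem:convex} to $\bar u$ or by transferring the discrete convexity of the strict subsolution $v^h$ from Lemma~\ref{lem:existSubUnder} via the stability of $u^h$. Once $\bar u$ is known to be convex, any test function producing a local maximum of $\bar u-\phi$ automatically satisfies $D^2\phi(x_0)\geq 0$, reducing the argument to the convex case treated above.
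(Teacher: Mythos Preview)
Your argument is exactly the Barles--Souganidis stability computation that the paper invokes by citation to~\cite{BSnum} and~\cite[Theorem~9]{Hamfeldt_Gauss}, so the core approach matches the paper's proof.

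You are right to flag the restriction of Definition~\ref{def:consistency} to convex test functions as the one nontrivial point, but neither of your proposed resolutions quite works as stated. Invoking Lemma~\ref{lem:convex} is circular, since that lemma presupposes the subsolution property you are proving; and the inequality $v^h\le u^h$ from Lemma~\ref{lem:existSubUnder} gives only a convex \emph{lower} barrier, which does not transfer convexity to $u^h$ or to $\bar u$. The clean way out is to observe that any reasonable scheme of the form~\eqref{eq:example1} has a component $F_2^h\approx -\lambda_1(D^2u)$ whose one-sided consistency $\liminf F_2^h[\phi]\ge -\lambda_1(D^2\phi)$ holds for \emph{all} smooth $\phi$, not just convex ones, since neither $f$ nor $g$ enters that component. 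Running your Barles--Souganidis argument with this component alone shows $\bar u$ is a viscosity subsolution of $-\lambda_1(D^2\bar u)=0$, hence convex by~\cite{ObermanCE}; then, as you note, every $C^2$ test function touching $\bar u$ from above automatically has $D^2\phi(x_0)\ge 0$, and after the standard perturbation $\phi\mapsto\phi+\epsilon|x-x_0|^2$ you land in the convex case where Definition~\ref{def:consistency} applies.
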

\begin{proof}
This proof is identical to the Barles-Souganidis convergence result~\cite{BSnum}, and the extension to general point clouds in~\cite[Theorem~9]{Hamfeldt_Gauss}.
\end{proof}

\begin{lemma}[Approximations are non-positive]\label{lem:negative}
Under the assumptions of Hypothesis~\ref{hyp}, let $F^h$ be a uniformly consistent, monotone, continuous, strictly underestimating approximation scheme. Let $u^h$ be any solution of the scheme~\eqref{eq:approx} and $U^h$ a piecewise constant extension onto $\bar{X}$.  Define
\bq\label{eq:usc2}
\bar{u}(x) = \limsup\limits_{h\to0, y\to x}U^h(y).
\eq
Then $\bar{u}(x) \leq 0$ on $\bar{X}$.
\end{lemma}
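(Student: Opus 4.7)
The plan is to first establish $\bar u \le 0$ on the topological boundary $\partial X$, and then extend the bound to the interior using the convexity of $\bar u$ as a subsolution of~\eqref{eq:MAOT}.

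For the boundary step I would fix $x_0 \in \partial X$ and $\epsilon > 0$, and deploy the family of strict supersolutions $u_p$ furnished by Lemma~\ref{lem:super}, choosing the slope $p$ strategically. Since $X$ is open and convex, there exists a unit vector $n$ with $n\cdot(x-x_0)\le 0$ for every $x\in\bar X$; for any scalar $c>0$ the point $y=x_0$ then minimises $p\cdot y$ over $\bar X$ when $p := -cn$. Because $\bar Y$ is bounded and $H>0$ off $\bar Y$, we may enlarge $c$ so that $p\notin\bar Y$ and $H(p)>\epsilon$. With this choice the supersolution
\[ u_p(x) = p\cdot x - \inf_{y\in\partial X}\{p\cdot y\} + \epsilon \]
satisfies the crucial identity $u_p(x_0)=\epsilon$. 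Applying the discrete comparison principle (Lemma~\ref{lem:discreteComp}) to $u^h$ (for which $F^h=0$) and $u_p$ (for which $F^h>0$) yields $u^h \le u_p$ on $\G^h$, so
\[ U^h(y) \le \sup\bigl\{u_p(z) : z\in\G^h,\ \abs{z-y}\le h\bigr\}. \]
Continuity of $u_p$ forces the right-hand side to tend to $u_p(x_0)=\epsilon$ as $h\to 0$ and $y\to x_0$, so passing to the $\limsup$ gives $\bar u(x_0)\le\epsilon$. Sending $\epsilon\to 0$ then yields $\bar u(x_0)\le 0$.

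For the interior step I would invoke Lemma~\ref{lem:uscApprox} to see that $\bar u$ is a viscosity subsolution of~\eqref{eq:MAOT} on $X$, hence convex on $X$ by Lemma~\ref{lem:convex}. The $\limsup$ in~\eqref{eq:usc2} renders $\bar u$ upper semi-continuous on $\bar X$, and stability of the scheme (Lemma~\ref{lem:stability}) keeps it bounded. For any $x_1\in X$, a line through $x_1$ meets $\partial X$ at two points $y_1,y_2$; along the open segment $\bar u$ is convex, and passing to the limit in the convexity inequality using upper semi-continuity at the endpoints gives $\bar u(x_1)\le\max\{\bar u(y_1),\bar u(y_2)\}\le 0$ by the previous paragraph.

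The main obstacle is the boundary step. The affine supersolutions of Lemma~\ref{lem:super} are generically strictly positive on all of $\bar X$, so naively they cannot produce an upper bound tending to zero. What rescues the argument is that aligning the slope $p$ with the inward normal to $\partial X$ at $x_0$ makes $\inf_{\partial X}\{p\cdot y\}=p\cdot x_0$, and this minimising identity is preserved under the rescaling $p\mapsto cp$. We may therefore independently inflate $\abs{p}$ to enforce $H(p)>\epsilon$ while keeping the pinned value $u_p(x_0)=\epsilon$ arbitrarily small, which is exactly what is needed to pinch $\bar u(x_0)$ down to $0$.
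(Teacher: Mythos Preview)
Your argument is correct, but it follows a genuinely different path from the paper's. The paper builds a single one-parameter family of global barriers
\[
v_\epsilon(x) = -\epsilon\,\dist(x,\partial X)^2 + 2\epsilon M,
\]
observes that $-\lambda_1(D^2v_\epsilon)\ge 2\epsilon$ forces $F^h[v_\epsilon]>0$ (interior) while $v_\epsilon>0$ on $\partial X$ (boundary), and then a single application of the discrete comparison principle gives $u^h\le v_\epsilon\le 2\epsilon M$ on all of $\G^h$ at once; sending $\epsilon\to 0$ finishes. By contrast, you treat each boundary point $x_0$ separately with an affine barrier oriented along the outward normal so that it is pinned to the value $\epsilon$ precisely at $x_0$, and then you push the bound into the interior by invoking the convexity of $\bar u$ established via Lemmas~\ref{lem:uscApprox} and~\ref{lem:convex}. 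The paper's route is shorter and yields a uniform bound in one stroke, but it leans on smoothness of the squared distance function to a general convex domain (which is not addressed). Your route is more circuitous, yet it reuses only ingredients already proved (the affine supersolutions of Lemma~\ref{lem:super} and the convexity of subsolutions), and in particular it sidesteps any regularity questions about $\dist(\cdot,\partial X)$. One small point worth making explicit in your write-up: the requirement $H(-cn)>\epsilon$ need not hold for arbitrary $\epsilon$ with a general defining function, but since you ultimately send $\epsilon\to 0$ it suffices to take $\epsilon$ below the fixed positive value $H(-c_0 n)$ for some $c_0$ with $-c_0 n\notin\bar Y$.
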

\begin{proof}
For $\epsilon>0$ define
\[ v_\epsilon(x) = -\epsilon \text{dist}(x, \partial X)^2 + 2\epsilon M \]
where
\[ M = \sup\limits_{x\in\partial X} \text{dist}(x, \partial X)^2. \]

For $x\in X$, we notice that
\[ F(x,v_\epsilon(x),\nabla v_\epsilon(x),D^2v_\epsilon(x)) \geq -\lambda_1(D^2v_\epsilon(x)) \geq 2\epsilon. \]
Uniform consistency ensures that for sufficiently small $h$, we have
\[ F^h(x,v_\epsilon(x),v_\epsilon(x)-v_\epsilon(\cdot)) > 0. \]
For $x\in\partial X$, we have
\[ F^h(x,v_\epsilon(x),v_\epsilon(x)-v_\epsilon(\cdot)) = v_\epsilon(x) \geq \epsilon M > 0. \]
Thus for small enough $h$ and any $x\in\bar{X}\cap\G^h$,
\[ F^h(x,v_\epsilon(x),v_\epsilon(x)-v_\epsilon(\cdot)) > 0 = F^h(x,u^h(x),u^h(x)-u^h(\cdot)). \]
By the discrete comparison principle (Lemma~\ref{lem:discreteComp}), 
$u^h(x) \leq v_\epsilon(x) \leq 2\epsilon M$
 in $\bar{X}$.  Taking $\epsilon\to 0$ we obtain
\[ \bar{u}(x) \leq 0.  \]
\end{proof}

\begin{proof}[Proof of Theorem~\ref{thm:converge}]
As before, we define
\bq\label{eq:usc3}
\bar{u}(x) = \limsup\limits_{h\to0, y\to x}U^h(y),
\eq
which is a subsolution of~\eqref{eq:MAOT} by Lemma~\ref{lem:uscApprox} and is therefore an Aleksandrov solution of the optimal transport problem~\eqref{eq:MA}, \eqref{eq:bvp2}.

We also let $v^h$ be the Aleksandrov solution of the semi-discrete \MA equation
\bq\label{eq:MAdiscrete2}
\begin{cases}
\int_{\partial v^h(E)}g(p)\,dp = \mu^h(E)\\
\partial v^h(\R^n) \subset \bar{Y}\\
\sup\limits_{x\in X}v^h(x) = -h
\end{cases}
\eq
where the measure $\mu$ is given by
\[ \mu^h(x) = \sum\limits_{x_i\in\G^h}\mu_i\delta_{x_i}(x), \quad \mu_i = \int_{V_{x_i}^h}f(z)\,dz. \]
Notice that $\mu^h$ converges weakly to the measure with density $f$ since each $V_{x_i}^h$ has diameter $\bO(h)$ and they together form a partition of $X$.  Moreover, the family of solutions $v^h(x)$ is uniformly Lipschitz continuous (with Lipschitz constant constrained to lie in the set $\bar{Y}$). Thus by the stability of the optimal transport problem $v^h(x)$ converges uniformly to $\bar{u}(x)$, the solution of the original optimal transport problem (\cite[Theorem~5.20]{Villani2}).

From the proof of Lemma~\ref{lem:existSubUnder}, $v^h$ is a strict subsolution of the approximation scheme~\eqref{eq:approx}.  Since
\[ F^h(x,v^h(x),v^h(x)-v^h(\cdot)) < 0 = F^h(x,u^h(x),u^h(x)-u^h(\cdot)),  \]
the discrete comparison principle (Lemma~\ref{lem:discreteComp}) ensures that $v^h(x) \leq u^h(x)$ at every $x\in\G^h$.

Next we define
\bq\label{eq:lsc}
\underline{u} = \liminf\limits_{h\to0, y\to x}U^h(y).
\eq

  Let $K$ be a bound on the Lipschitz constant for the family $\{v^h\}$.  For each $x\in X$ and $h>0$ we can find some $x_h\in\G^h$ such that $U^h(x) = u^h(x_h)$ with $x_h\to x$ as $h\to0$.  Thus we can compute
\begin{align*}
U^h(x) = u^h(x_h) \geq v^h(x_h) \geq v^h(x) - K\abs{x_h-x}.
\end{align*}
Letting $h\to0$ we obtain
\[ \underline{u}(x) \geq \lim\limits_{h\to0} v^h(x) = \bar{u}(x). \]

Moreover, by definition we must have $\underline{u}(x) \leq \bar{u}(x)$.  Combining these results, we conclude that
\[ \lim\limits_{h\to0} U^h(x) = \bar{u}(x), \]
which is a solution of the optimal transport problem.
\end{proof}

\section{Examples of schemes}\label{sec:construction}
A key contribution of our convergence theorem is that it provides a solid theoretical foundation to many existing methods after only slight modification.  To demonstrate the reasonableness of our assumptions, we provide a concrete example of one such method.

Note that in fact, \emph{any} consistent, monotone scheme $F^h$ can be modified to produce a scheme that is (at least locally) underestimating and therefore convergent.  This can be accomplished by using a scheme of the form $F^h-h^\alpha$ where $\alpha>0$ is related to the discretisation error of the method.  Thus with only slight modification, existing schemes for the \MA equation~\cite{benamou2014monotone,BenamouDuval_MABVP2,BFO_OTNum,FinlayOberman,FroeseMeshfreeEigs,FO_MATheory,HS_Quadtree,Nochetto_MAConverge}, eigenvalues of the Hessian~\cite{ObermanCE,ObermanEigenvalues}, and Hamilton-Jacobi equations (which are first-order and more well-developed) can be fit within this convergence framework.

Below we give an example of a method that is naturally underestimating, which allows for an improvement in the formal consistency error.  The foundation of this method is a careful combination of the Lattice Basis Reduction (LBR) method~\cite{benamou2014monotone} and constrained Lax-Friedrichs approximations.  The LBR method has been successfully used to solve the second boundary value problem for the \MA equation in~\cite{BenamouDuval_MABVP2}.  In that work, an underestimating property of the LBR scheme was used to prove convergence.  However, existence of solutions to the scheme was left open.  Here we show how the LBR scheme can be incorporated into our convergence framework by using modified Lax-Friedrichs approximations for the transportation constraint and density functions.  The result is a globally monotone and underestimating scheme, which guarantees both existence of solutions and convergence to the appropriate weak solution.

For simplicity, we restrict our attention to Lipschitz continuous densities $f, g$ on a square domain $X$ in two dimensions.  However, these can be generalised to more complicated problems using, for example, the techniques of~\cite{HS_Quadtree}.

\subsection{Finite difference approximations}

For this example, our grid $\G^h$ consists of a uniform Cartesian grid with spacing $h$, augmented by $\bO(h^{3/2})$ points uniformly distributed along the boundary of the domain.

We will break the scheme into three components, which will all be consistent, monotone, and underestimating:
\begin{align*}
F_1^h[u] &\approx -g(\nabla u(x))\det(D^2u(x)) + f(x)\\
F_2^h[u] &\approx -\lambda_1(D^2u(x)) \\
F_3^h[u] &\approx H(\nabla u(x)).
\end{align*}
Then a consistent, monotone, strictly underestimating scheme is
\bq\label{eq:example1}
F^h[u] = \begin{cases}
\max\{F_1^h[u],F_2^h[u],F_3^h[u]\} - h^\alpha, & x \in \G^h\cap X\\
u(x), & x\in \G^h\cap\partial X
\end{cases}
\eq
where $h^\alpha$ is chosen to be less than the discretisation error of the component schemes.

We will rely on the following standard finite difference operators on the Cartesian grid.  Here $e_j$ is a unit vector in the coordinate direction $x_j$.
\begin{align*}
\Dt_{x_j}^+u(x) &= \frac{u(x+he_j)-u(x)}{h}\\
\Dt_{x_j}^-u(x) &= \frac{u(x)-u(x-he_j)}{h}\\
\Dt_{x_j}^0u(x) &= \frac{u(x+he_j)-u(x-he_j)}{2h}\\
\Dt_{x_jx_j}u(x) &= \frac{u(x+he_j)+u(x-he_j)-2u(x)}{h^2}\\
\Delta^hu(x) &= \Dt_{x_1x_1}u(x) + \Dt_{x_2x_2}u(x).
\end{align*}

We denote by $L_f$, $L_g$, and $L_H$ the Lipschitz constants of $f$, $g$, and $H$ respectively.

\subsection{Monge-Amp\`ere operator}  As an example of a scheme for the \MA operator that is known to be monotone and underestimating, we consider a scheme based on the Lattice Basis Reduction method~\cite{benamou2014monotone}. This scheme has a natural interpretation in terms of measures, and was also used to discretise the \MA equation in the method of~\cite{BenamouDuval_MABVP2}. 

This operator relies on the concept of a superbase.
\begin{definition}[Superbase]\label{def:superbase}
A superbase of $\Z^2$ is a triplet $(e,e',e'')\in(\Z^2)^3$ satisfying $\det(e,e')=1$ and $e+e'+e''=0$.
\end{definition}

Like other monotone finite difference methods for fully nonlinear elliptic equations, this scheme relies on a wide stencil.  The superbases will define the grid directions that are utilised in the stencil.  We consider the following admissible set.
\begin{align}\label{eq:admissible}
\Af^h_x = &\{(e,e',e'') \text{ superbases of }\Z^2 \mid \\ &x\pm he,x\pm he', x\pm he''\in\G^h, \|e\|_\infty, \|e'\|_\infty, \|e''\|_\infty < \sqrt{N} \}. \nonumber
\end{align}

The scheme makes use of standard centred differencing in grid-aligned directions.  
\bq\label{eq:2ndderiv}
\Delta^h_{ee}u(x) = {u(x+eh)+u(x-eh)-2u(x)}.
\eq
Then the \MA operator can be approximated by
\bq\label{eq:MAApprox}
{\det}^h(D^2u)(x) = \min\limits_{(e,e',e'')\in\Af} G(\max\{\Delta^h_{ee}u(x),0\},\max\{\Delta^h_{e'e'}u(x),0\},\max\{\Delta^h_{e''e''}u(x),0\})
\eq
where
\[
G(a,b,c) = \begin{cases}
bc & a \geq b+c\\
ca & b \geq c+a\\
ab & c \geq a+b\\
\frac{1}{2}(ab+bc+ca)-\frac{1}{4}(a^2+b^2+c^2) & \text{otherwise}.
\end{cases}
\]

We use a Lax-Friedrichs approximation of the target density $g(\nabla u)$, which is constrained to be non-negative,
\bq\label{eq:gApprox}
g^h(\nabla u)(x) = \max\{g(\Dt^0_{x_1}u(x),\Dt^0_{x_2}u(x)) + hL_g\Delta^hu(x),0\},
\eq
and an underestimate of the source density $f$,
\bq\label{eq:fApprox}
f^h(x) = \max\left\{f(x)-\frac{1}{\sqrt{2}}hL_f, 0\right\}.
\eq

Then a monotone scheme for the combined \MA equation is
\bq\label{eq:approxF1}
F_1^h(x,u(x),u(x)-u(\cdot)) = -g^h(\nabla u)(x) {\det}^h(D^2u)(x) + f^h(x).
\eq

\begin{lemma}[Underestimation of Monge-Amp\`ere]\label{lem:underMA}
Let $u$ be a convex piecewise linear surface with node values on $\G^h$.  For any $x\in\G^h\cap X$ define the rectangle $R_x^h=x+[-h/2,h/2]^2$.  Then
\[ F^h_1(x,u(x),u(x)-u(\cdot)) \leq \frac{-\int_{\partial u(R_x^h)}g(y)\,dy + \int_{R_x^h}f(z)\,dz}{h^2}. \]
\end{lemma}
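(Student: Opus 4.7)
The plan is to reduce the inequality to two independent estimates: bound the source contribution $h^2 f^h(x)$ below by the integral of $f$ over the Voronoi cell, and bound the exact subgradient mass $\int_{\partial u(R_x^h)}g\,dy$ above by the scheme's product $h^2\,g^h(\nabla u)(x)\,{\det}^h(D^2u)(x)$. Because $R_x^h$ is the closed square Voronoi cell containing only the grid node $x$, and the multivalued part of the subgradient of a piecewise linear convex function is concentrated at its nodes, the sets $\partial u(R_x^h)$ and $\partial u(x)$ agree up to a Lebesgue-null set, so $\int_{\partial u(R_x^h)}g\,dy=\int_{\partial u(x)}g(y)\,dy$. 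Moreover every $z\in R_x^h$ satisfies $|z-x|\le h/\sqrt{2}$, so Lipschitz continuity of $f$ gives $f(z)\ge f(x)-hL_f/\sqrt{2}\ge f^h(x)$ and hence $\int_{R_x^h}f(z)\,dz\ge h^2 f^h(x)$. What remains is to prove
\[ \int_{\partial u(x)}g(y)\,dy \leq h^2\,g^h(\nabla u)(x)\,{\det}^h(D^2u)(x). \]

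For this I first extract a uniform pointwise bound on $g$ over $\partial u(x)$. Convexity of $u$ makes $\partial u(x)$ a convex polygon, and testing the subgradient inequality $u(x\pm he_j)\ge u(x)+p\cdot(\pm he_j)$ for $p\in\partial u(x)$ shows $\partial u(x)\subset[\Dt_{x_1}^-u,\Dt_{x_1}^+u]\times[\Dt_{x_2}^-u,\Dt_{x_2}^+u]$, a rectangle centred at $(\Dt_{x_1}^0u(x),\Dt_{x_2}^0u(x))$ whose half-diagonal is
\[ \tfrac{h}{2}\sqrt{(\Dt_{x_1x_1}u)^2+(\Dt_{x_2x_2}u)^2}\;\leq\;\tfrac{h}{2}\Delta^h u(x), \]
since $\Dt_{x_jx_j}u(x)\ge 0$ by convexity. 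Lipschitz continuity of $g$ then gives, for every $y\in\partial u(x)$,
\[ g(y)\;\leq\; g(\Dt_{x_1}^0u(x),\Dt_{x_2}^0u(x))+\tfrac{hL_g}{2}\Delta^h u(x)\;\leq\; g^h(\nabla u)(x). \]
Pulling this uniform bound out of the integral reduces the claim to the purely geometric inequality $|\partial u(x)|\leq h^2\,{\det}^h(D^2u)(x)$.

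This last area bound is the Minkowski-type underestimating property of the Lattice Basis Reduction operator~\eqref{eq:MAApprox} for piecewise linear convex functions on the Cartesian grid, which serves as the foundation of the convergence analysis in~\cite{BenamouDuval_MABVP2}. I expect this step to be the main obstacle. The polynomial $G(a,b,c)$ in~\eqref{eq:MAApprox} is engineered so that $\min_{(e,e',e'')}G(\Delta^h_{ee}u,\Delta^h_{e'e'}u,\Delta^h_{e''e''}u)$ agrees with $\det(D^2u)$ on smooth Hessians, but translating this identification into an area bound on the subgradient polygon of a piecewise linear $u$ requires identifying $\Delta^h_{ee}u(x)$ with the jump of the directional slope of $u$ across $x$ in the direction $e$ and then decomposing $\partial u(x)$ into triangles coordinatised by a minimising superbase $(e,e',e'')\in\Af^h_x$. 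Granting this decomposition, each triangle has area at most one half of the relevant product $\Delta^h_{ee}u\cdot\Delta^h_{e'e'}u$, and a short combinatorial sum closes the inequality and thus the lemma.
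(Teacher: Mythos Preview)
Your proposal is correct and follows essentially the same three-step decomposition as the paper: bound $f^h$ below by the average of $f$ over $R_x^h$ via the Lipschitz constant, bound $g$ uniformly on $\partial u(x)$ by $g^h(\nabla u)(x)$ using the rectangle $[\Dt_{x_1}^-u,\Dt_{x_1}^+u]\times[\Dt_{x_2}^-u,\Dt_{x_2}^+u]$, and invoke the LBR area overestimate $\abs{\partial u(x)}\leq h^2\,{\det}^h(D^2u)(x)$. The paper obtains the enclosing rectangle by building an auxiliary cone $v\geq u$ with $\partial v(x)$ equal to that rectangle, whereas you read it off directly from the subgradient inequality; both are fine, and your half-diagonal bound $\tfrac{h}{2}\Delta^h u$ is in fact slightly sharper than the paper's $\tfrac{h}{\sqrt 2}\Delta^h u$, though either suffices against the $hL_g\Delta^h u$ term in $g^h$. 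For the final area inequality the paper does not reprove anything: it simply cites \cite[Lemma~4.3]{BenamouDuval_MABVP2}, so your attempt to sketch a superbase-triangle decomposition is unnecessary here --- you may replace that paragraph with the same citation and the proof is complete.
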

\begin{proof}
We begin by controlling the approximation of the source density function.  For any $z\in R_x^h$ we have
\[  f(z) \geq f(x) - L_f\abs{z-x} \geq f(x)-L_f\frac{h}{\sqrt{2}}. \]
Since $f$ is non-negative, we conclude that
\[ f(z) \geq \max\left\{f(x)-L_f\frac{h}{\sqrt{2}},0\right\} = f^h(x). \]
Therefore
\bq\label{eq:fUnder} f^h(x) \leq \frac{1}{h^2}\int_{R_x^h}f(z)\,dz. \eq

Next we control the target density.  We begin by investigating arguments belonging to $\partial u(x)$.  Define $v$ as the following convex cone: 
\[ v(x) = \sup\left\{v \text{ convex}\mid v(x)=u(x),\, v(x\pm h e_1)=u(x\pm he_1), \, v(x\pm he_2) = u(x \pm he_2) \right\}. \]
Notice that $v \geq u$ and thus $\partial u(x) \subset \partial v(x)$ where $\partial v(x)$ is given explicitly by the following rectangle.
\begin{align*} R \equiv \partial v(x) &= \left[\Dt_{x_1}^-v(x),\Dt_{x_1}^+v(x)\right]\times\left[\Dt_{x_2}^-v(x),\Dt_{x_2}^+v(x)\right]\\
&=\left[\Dt_{x_1}^-u(x),\Dt_{x_1}^+u(x)\right]\times\left[\Dt_{x_2}^-u(x),\Dt_{x_2}^+u(x)\right]\\
&= \left[\Dt_{x_1}^0u(x)-\frac{\ell_1}{2},\Dt_{x_1}^0u(x)+\frac{\ell_1}{2}\right]\times\left[\Dt_{x_2}^0u(x)-\frac{\ell_2}{2},\Dt_{x_2}^0u(x)+\frac{\ell_2}{2}\right].
\end{align*}
Here, the side lengths are given by
\[ \ell_j = \Dt_{x_j}^+u(x)-\Dt_{x_j}^-u(x) = h\Dt_{x_jx_j}u(x). \]
We recall that this is necessarily non-negative since $u$ is a convex function.  Thus we can also bound this side length by the discrete Laplacian of $u$.
\[ 0 \leq \ell_j \leq \ell_1+\ell_2 = h\Delta^hu(x). \]

Now consider any $p\in\partial u(x)$.  Then $p\in R$ as well so that
\[ \abs{p-(\Dt_{x_1}^0u(x),\Dt_{x_2}^0u(x))} \leq \frac{1}{2}\abs{(\ell_1,\ell_2)} \leq \frac{h}{\sqrt{2}}\Delta^hu(x).\]
Since $g$ is Lipschitz continuous, we can control its variation over $\partial u(x)$ by
\[ g(p) \leq g(\Dt_{x_1}^0u(x),\Dt_{x_2}^0u(x)) + \frac{h}{\sqrt{2}}L_g\Delta^hu(x) \]
As $g$ is non-negative, we also have
\[ g(p) \leq \max\{g(\Dt_{x_1}^0u(x),\Dt_{x_2}^0u(x)) + hL_g\Delta^hu(x),0\} = g^h(\nabla u)(x). \]

Now we recall that the subgradient measure $\partial u$ has all its mass concentrated on node points $x\in\G^h$.  That is,
\[ \abs{\partial u(R_x^h)} = \abs{\partial u(x)}. \]
Then we can bound the approximation of the target measure by
\bq\label{eq:gOver}
\int_{\partial u(R_x^h)}g(y)\,dy = \int_{\partial u(x)}g(y)\,dy \leq \abs{\partial u(x)} g^h(\nabla u)(x).
\eq

Finally, we need to control the approximation of the subgradient measure.  By~\cite[Lemma~4.3]{BenamouDuval_MABVP2}, the approximation of the \MA operator is an overestimation on piecewise linear grid functions so that
\bq\label{eq:MAOver}
{\det}^h(D^2u)(x) \geq \frac{1}{h^2}\abs{\partial u(x)}.
\eq

Combining~\eqref{eq:fUnder}, \eqref{eq:gOver}, and \eqref{eq:MAOver}, we obtain
\begin{align*}
F_1^h(x,u(x),u(x)-u(\cdot)) &= -g^h(\nabla u)(x) {\det}^h(D^2u)(x) + f^h(x)\\
  &\leq \frac{-\int_{\partial u(R_x^h)}g(y)\,dy + \int_{R_x^h}f(z)\,dz}{h^2}. 
\end{align*}
\end{proof}

\subsection{Convexity constraint}  To discretise the smallest eigenvalue of the Hessian, we use the generalise finite difference schemes described in~\cite{FroeseMeshfreeEigs}.  We begin by seeking an approximation of a general second directional derivative in the direction $\nu$.  We propose the form
\bq\label{eq:disc2nd}
\Dt_{\nu\nu} u(x_0) = \sum\limits_{j=1}^4a_j(u(x_j)-u(x_0)).
\eq
Here the $x_i\in\G^h$ are four points within a distance $\sqrt{h}$ of $x_0$ that align as well as possible with the direction $\nu$.  To do this, we consider orthogonal coordinate axes defined by the lines $x_0+t\nu$ and $x_0 + t\nu^\perp$.  Then the four neighbours are given by
\[ x_j = \argmax\limits_{x\in\G^h}\left\{\abs{\frac{x-x_0}{\abs{x-x_0}}\cdot\nu} \mid 0 < \abs{x-x_0} < \sqrt{h}, \, x \text{ is in the $j$th quadrant.}\right\} \]

The non-negative coefficients are chosen via Taylor expansion to ensure consistency of the expression.  The requirements for the scheme are
\bq\label{eq:coeffs}
\begin{cases}
\sum\limits_{j=1}^4 a_j(x_j-x_0)\cdot\hat{\nu} = 0 \\
\sum\limits_{j=1}^4 a_j(x_j-x_0)\cdot\hat{\nu}^\perp = 0\\
\frac{1}{2}\sum\limits_{j=1}^4 \abs{(x_j-x_0)\cdot\hat{\nu}}^2 = 1\\
a_j \geq 0.
\end{cases}
\eq
A consistent, monotone approximation is guaranteed to exist (and can be obtained explicitly) by~\cite[Theorem~13]{FroeseMeshfreeEigs}.

The approximation of $\lambda_1(D^2u(x))$ is based on the Rayleigh-Ritz characterisation of the eigenvalues of the Hessian:
\bq\label{eq:RR}
\lambda_1(D^2u) = \min\limits_{\abs{\nu}=1}\frac{\partial^2u}{\partial\nu^2}.
\eq
We consider the following discretisation of all unit vectors,
\[ \nu_j = (\cos(j d\theta),\sin(jd\theta)), \, j = 0, \ldots, N_\theta \]
where $d\theta=\bO(\sqrt{h})$ and $(N_\theta+1)d\theta = \pi$.
Then a consistent, monotone approximation of this operator is
\bq\label{eq:discLambda1}
F_2^h(x,u(x),u(x)-u(\cdot)) = -\min\limits_{j=1, \ldots, N_\theta} \Dt_{\nu_j\nu_j} u(x).
\eq

\begin{lemma}[Underestimation of convexity constraint]\label{lem:underConvex}
Let $u$ be any convex function.  Then
\[ F_2^h(x,u(x),u(x)-u(\cdot)) \leq 0, \quad x \in\G^h\cap X. \]
\end{lemma}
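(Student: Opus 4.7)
The plan is to show that each individual directional second derivative approximation $\Dt_{\nu_j\nu_j} u(x)$ is non-negative for a convex function $u$, from which the claim follows immediately since $F_2^h = -\min_j \Dt_{\nu_j\nu_j} u(x)$.

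The key observation comes from the coefficient conditions~\eqref{eq:coeffs}. The first two requirements,
\[
\sum_{j=1}^4 a_j (x_j-x_0)\cdot\hat{\nu}=0, \qquad \sum_{j=1}^4 a_j (x_j-x_0)\cdot\hat{\nu}^\perp=0,
\]
combine (since $\hat\nu$ and $\hat\nu^\perp$ form an orthonormal basis of $\R^2$) into the single vector identity
\[ \sum_{j=1}^4 a_j(x_j-x_0) = 0. \]
Moreover, from the normalization $\frac12\sum_j a_j|(x_j-x_0)\cdot\hat\nu|^2=1$ together with $a_j\geq 0$, we have $A\equiv\sum_j a_j > 0$. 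Dividing through, $x_0 = \frac{1}{A}\sum_j a_j x_j$, so $x_0$ is expressed as a convex combination of the stencil neighbours $x_1,\ldots,x_4$.

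With this representation in hand, Jensen's inequality applied to the convex function $u$ yields
\[ u(x_0) \leq \frac{1}{A}\sum_{j=1}^4 a_j u(x_j), \]
which rearranges to $\sum_j a_j(u(x_j)-u(x_0)) \geq 0$, i.e.\ $\Dt_{\nu_j\nu_j} u(x) \geq 0$ for every direction $\nu_j$ in the discretisation. Taking the minimum over $j=1,\ldots,N_\theta$ and negating gives $F_2^h(x,u(x),u(x)-u(\cdot)) \leq 0$ at every interior grid point.

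I do not anticipate a serious obstacle here: the whole argument rests on recognising that the consistency conditions on the $a_j$ automatically encode $x_0$ as a convex combination of its stencil neighbours, after which convexity of $u$ does the work via Jensen. The only minor point requiring care is ensuring $A>0$ so that the convex-combination step is legitimate, and this is guaranteed by the normalisation in~\eqref{eq:coeffs}.
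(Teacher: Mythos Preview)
Your proposal is correct and follows essentially the same route as the paper: both proofs observe that the consistency conditions~\eqref{eq:coeffs} force $\sum_j a_j(x_j-x_0)=0$, so that $x_0$ is a convex combination of the stencil neighbours, and then apply Jensen's inequality to conclude $\Dt_{\nu\nu}u(x_0)\geq 0$. Your version is marginally more explicit in justifying $A>0$ via the normalisation condition, which the paper leaves implicit.
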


\begin{proof}
We will show that if $u$ is convex,
\[ \Dt_{\nu\nu}u(x_0) \geq 0 \]
for any choice of $\abs{\nu}=1$ and $x_0\in\G^h\cap X$.

Let $A = \sum\limits_{j=1}^4 a_j$ and recall that all $a_j \geq 0$ because of monotonicity.  Since $u$ is convex, we can compute
\begin{align*}
\Dt_{\nu\nu}u(x_0) &= A\left(\sum\limits_{j=1}^4 \frac{a_j}{A}\ u(x_j)-u(x_0)\right)\\
  &\geq A\left(u\left(\sum\limits_{j=1}^4 \frac{a_j}{A} x_j\right)-u(x_0)\right).
\end{align*}

The consistency conditions~\eqref{eq:coeffs} guarantee that
\[ \frac{1}{A}\sum\limits_{j=1}^4a_jx_j = \frac{1}{A}\sum\limits_{j=1}^4 a_jx_0 = x_0 \]
and therefore
\[ \Dt_{\nu\nu}u(x_0) \geq 0. \]

Then we trivially have that
\[ F_2^h(x,u(x),u(x)-u(\cdot)) = -\min\limits_{j=1, \ldots, N_\theta} \Dt_{\nu_j\nu_j} u(x) \leq 0. \]
\end{proof}

\subsection{Transport constraint}
To discretise the Hamilton-Jacobi operator that enforces the transport constraint, we will use a modified version of the Lax-Friedrichs scheme.

We begin by defining two constants related to the convex target set $Y$.  The first is an inner diameter,
\bq\label{eq:innerDiam}
D = \sup\left\{\ell \mid \text{ any line segment of length $\ell$ that intersects $\bar{Y}$ has an endpoint in $\bar{Y}$}\right\}.
\eq
Using this, we can define a discretisation parameter $M$ (independent of $h$) by
\bq\label{eq:defineM}
M = \ceil{2{\max\limits_{y\in\partial Y}\abs{y}} / {D}}.
\eq

Next, we approximations of the gradient that rely on weighted averages of the forward and backward differencing operators.
\bq\label{eq:gradapprox}
\nabla^h_{ij}u(x) = \left(\left(1-\frac{i}{M}\right)\Dt_x^-u(x)+\frac{i}{M}\Dt_x^+u(x), \left(1-\frac{j}{M}\right)\Dt_y^-u(x)+\frac{j}{M}\Dt_y^+u(x)\right).
\eq

Finally, we can define the following modified Lax-Friedrichs scheme, which is consistent and monotone.
\bq\label{eq:HJapprox}
F_3(x,u(x),u(x)-u(\cdot)) = \min\limits_{0 \leq i,j \leq M}H(\nabla^h_{ij}u(x)) - hL_H\Delta^hu(x).
\eq

\begin{lemma}[Underestimation of transport constraint]\label{lem:underTransport}
Let $u$ be any convex function with $\partial u(\G^h)\cap\bar{Y} \neq \phi$.  Then for any $x\in\G^h\cap X$,
\[ F_3(x,u(x),u(x)-u(\cdot)) \leq 0. \]
\end{lemma}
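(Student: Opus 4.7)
The plan is to locate, for every interior grid point $x\in\G^h\cap X$, a single index pair $(i^\ast,j^\ast)$ at which $H(\nabla^h_{i^\ast j^\ast}u(x))$ is small enough to be dominated by the artificial-viscosity term $-hL_H\Delta^h u(x)$ in~\eqref{eq:HJapprox}. The key geometric observation is that the collection $\{\nabla^h_{ij}u(x)\}_{0\leq i,j\leq M}$ defined in~\eqref{eq:gradapprox} is a uniform $(M+1)\times(M+1)$ lattice tiling the ``derivatives rectangle''
\[ R_x=[\Dt_{x_1}^-u(x),\Dt_{x_1}^+u(x)]\times[\Dt_{x_2}^-u(x),\Dt_{x_2}^+u(x)], \]
with cell dimensions $\ell_1/M$ and $\ell_2/M$, where $\ell_j=h\Dt_{x_jx_j}u(x)\geq 0$ and hence $\ell_1+\ell_2=h\Delta^h u(x)$.

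First I would recall, exactly as in the proof of Lemma~\ref{lem:underMA}, that any convex $u$ satisfies $\partial u(x)\subset R_x$ at each grid point; this follows from the maximal-convex-extension argument already used there (or directly from monotonicity of difference quotients for convex functions). Interpreting the hypothesis in the sense of Definition~\ref{def:underestimate}, namely $\partial u(x)\cap\bar Y\neq\emptyset$ at every $x\in\G^h$, I would then select $p^\ast\in\partial u(x)\cap\bar Y\subset R_x$, so that $H(p^\ast)\leq 0$ by Definition~\ref{def:defining}.

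Second, because $R_x$ is partitioned into cells of side lengths $\ell_1/M$ and $\ell_2/M$ with nodes at the $\nabla^h_{ij}u(x)$, there exist indices $(i^\ast,j^\ast)$ satisfying
\[ \abs{\nabla^h_{i^\ast j^\ast}u(x)-p^\ast}\leq\tfrac{1}{2M}\sqrt{\ell_1^2+\ell_2^2}\leq\tfrac{1}{2M}(\ell_1+\ell_2)=\tfrac{h}{2M}\Delta^h u(x). \]
Lipschitz continuity of $H$ then yields $H(\nabla^h_{i^\ast j^\ast}u(x))\leq\tfrac{hL_H}{2M}\Delta^h u(x)$, and substituting into~\eqref{eq:HJapprox} gives
\[ F_3^h(x,u(x),u(x)-u(\cdot))\leq\tfrac{hL_H}{2M}\Delta^h u(x)-hL_H\Delta^h u(x)=-hL_H\Delta^h u(x)\bigl(1-\tfrac{1}{2M}\bigr)\leq 0, \]
since $M\geq 1$ and $\Delta^h u(x)\geq 0$ by convexity of $u$.

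The main obstacle is conceptual rather than computational: one must reconcile the lemma's hypothesis with Definition~\ref{def:underestimate}, and verify carefully that the convex-combination form of $\nabla^h_{ij}u(x)$ in~\eqref{eq:gradapprox} does produce a \emph{uniform} $(M+1)\times(M+1)$ lattice filling $R_x$, rather than merely a set of candidate gradients. Once these structural points are in place, the Lipschitz bound on $H$ combines cleanly with the Laplacian-multiple viscosity term to absorb the worst-case interpolation error, and the inequality $F_3^h\leq 0$ follows without any dependence on the specific value of $M$ beyond $M\geq 1$ (the more restrictive choice in~\eqref{eq:defineM} is what will matter instead for consistency).
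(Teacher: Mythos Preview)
Your argument is correct, but it differs from the paper's in an interesting way. The paper never invokes the Lipschitz constant $L_H$ to control the distance from a lattice point to $\bar Y$. Instead, it bounds the side lengths of $R_x$ \emph{a priori} by $2\max_{y\in\partial Y}\abs{y}$, so that the lattice spacing in each coordinate is at most $D$; by the definition of the inner diameter~\eqref{eq:innerDiam} and the choice of $M$ in~\eqref{eq:defineM}, this forces some lattice node $\nabla^h_{i^\ast j^\ast}u(x)$ to land \emph{inside} $\bar Y$, whence $H(\nabla^h_{i^\ast j^\ast}u(x))\leq 0$ outright. The viscosity term $-hL_H\Delta^h u(x)$ is then simply discarded as non-positive. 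Your route instead lets the viscosity term do the work: you only need the nearest lattice node to be within $\tfrac{h}{2M}\Delta^h u(x)$ of a point of $\bar Y$, and the Lipschitz bound on $H$ together with the matching coefficient $hL_H$ absorbs the resulting error. This is more elementary---it uses neither the inner diameter $D$ nor the specific value of $M$ beyond $M\geq 1$---and it reveals that the artificial-viscosity term already suffices for underestimation. The paper's argument, on the other hand, explains the otherwise opaque formula~\eqref{eq:defineM}: that choice of $M$ is tuned precisely so the geometric ``some node lands in $\bar Y$'' argument goes through. One small correction to your closing parenthetical: the restrictive choice of $M$ in~\eqref{eq:defineM} is used by the paper for \emph{underestimation}, not consistency; consistency of $F_3^h$ holds for any fixed $M$ since $\nabla^h_{ij}u(x)\to\nabla u(x)$ for every $i,j$ as $h\to 0$.
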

\begin{proof}
We begin by defining the following cone,
\[ v(x) = \sup\left\{v \text{ convex}\mid v(x)=u(x),\, v(x\pm h e_1)=u(x\pm he_1), \, v(x\pm he_2) = u(x \pm he_2) \right\}. \]
Notice that $v \geq u$ and thus $\partial u(x) \subset \partial v(x)$ where $\partial v(x)$ is given explicitly by the following rectangle.
\begin{align*} R \equiv \partial v(x) &= \left[\Dt_x^-v(x),\Dt_x^+v(x)\right]\times\left[\Dt_y^-v(x),\Dt_y^+v(x)\right]\\
&=\left[\Dt_x^-u(x),\Dt_x^+u(x)\right]\times\left[\Dt_y^-u(x),\Dt_y^+u(x)\right].
\end{align*}
In particular, it must be the case that $R\cap \bar{Y}$ is non-empty since $\partial u(x) \subset R$.

Moreover, we can find \emph{a priori} bounds on the size of this rectangle since $u$ is convex and we have information about its subgradient.  In particular, for $j = 1,2$ we have
\[ \abs{u(x\pm he_j)-u(x)} \leq h \max\limits_{y\in\partial Y} \abs{y} \]
and thus the side lengths of the rectangle are bounded by
\[ \Dt_{x_j}^+u(x)-\Dt_{x_j}^-u(x) \leq \abs{\Dt_{x_j}^+u(x)}+\abs{\Dt_{x_j}^-u(x)} \leq 2 \max\limits_{y\in\partial Y}\abs{y}. \]

Now we notice that $\nabla^h_{ij}u(x)$, $i,j=0,\ldots,M$ is a discretisation of the rectangle $R$ with stepsize
\[ \frac{\Dt_x^+u(x) - \Dt_x^-u(x)}{M} \leq \frac{2 \max\limits_{y\in\partial Y}\abs{y}}{\ceil{2 \max\limits_{y\in\partial Y}\abs{y}/D}} \leq D. \]
Recalling the definition of $D$ as the inner diameter of $Y$, we find that
\[ \nabla^h_{ij}u(x) \in \bar{Y} \]
for some choice of $i^*,j^*=0,\ldots,M$.

Because $H$ is a defining function for $Y$, we are guaranteed that
\[ H(\nabla^h_{i^*j^*}u(x)) \leq 0 \] 
and therefore
\[ \min\limits_{i,j=0,\ldots,M}H(\nabla^h_{ij}u(x)) \leq 0. \]

Finally, we observe that the discrete Laplacian satisfies
\[ \Delta^hu(x) = \Dt_{x_1x_1}u(x)+\Dt_{x_2x_2}u(x), \]
which is non-negative since $u$ is convex.

Combining these results, we obtain
\begin{align*}
F_3^h(x,u(x),u(x)-u(\cdot)) &= \min\limits_{i,j=0,\ldots,M}H(\nabla^h_{ij}u(x)) - hL_H\Delta^hu(x) \leq 0. 
\end{align*}
\end{proof}

\section{Conclusions}\label{sec:conclusions}
In this article, we introduced and analysed a new notion of weak solution for the second boundary value problem for the \MA equation.  This definition relied on the usual concept of a viscosity subsolution applied to a modified PDE that simultaneously enforced the \MA equation, convexity constraint, and global constraint on the solution gradient.  We proved that these viscosity subsolutions are equivalent to solutions of the original optimal transportation problem.

Using this new formulation, we showed that the Barles-Souganidis convergence framework, which requires PDEs to satisfy a comparison principle, can be modified to apply to viscosity subsolutions of the second boundary value problem for \MA (which does not satisfy a comparison principle). In particular, we demonstrated that consistent, monotone, underestimating schemes will converge to the weak solution of the optimal transportation problem.  Many existing numerical methods, that were previously not equipped with convergence proofs, can be modified to fit into this framework.  We also provided a concrete example showing how existing discretisations can be easily modified to fit this convergence framework.

\bibliographystyle{plain}
\bibliography{OTBC_Viscosity}
\end{document}